\def\normalsize{\@setfontsize\normalsize\@xipt{11}}
\theoremstyle{plain}
\newtheorem{theorem}{Theorem}
\newtheorem{lemma}{Lemma}
\theoremstyle{definition}
\theoremstyle{remark}
\begin{document}

\begin{center}
\LARGE
Asymptotic Behaviour of Resonance Eigenvalues of the Schr\"{o}dinger Operator with a Matrix Potential
\end{center}
\vspace{1cm}

\normalsize
\centerline{Sedef KARAKILI\c{C}}\centerline{Department of
Mathematics, Faculty of Science, Dokuz Eyl\"{u}l
University,} \centerline{T{\i}naztepe Camp., Buca, 35160, Izmir,
Turkey} \centerline{sedef.erim@deu.edu.tr}
\vspace{0.5cm}
\centerline{Setenay AKDUMAN} \centerline{Department of Mathematics,
	Faculty of  Science, Dokuz Eyl\"{u}l University,}
\centerline{T{\i}naztepe Camp., Buca, 35160, Izmir, Turkey}
\centerline{setenayakduman@gmail.com}
\vspace{0.5cm}
\centerline{Didem CO\d{S}KAN} \centerline{Department of
Mathematics, Faculty of Science, Dokuz Eyl\"{u}l
University,} \centerline{T{\i}naztepe Camp., Buca, 35160, Izmir,
Turkey} \centerline{coskan.didem@gmail.com}
\vspace{0.5cm}

\begin{abstract}

We will discuss the asymptotic behaviour of the eigenvalues of Schr\"{o}dinger operator with a matrix potential defined by Neumann boundary condition in $L_2^m(F)$, where $F$ is $d$-dimensional rectangle and the potential is a  $m \times m$ matrix with $m\geq 2$,  $d\geq 2$ , when the  eigenvalues belong to the resonance domain, roughly speaking they lie  near planes of diffraction. \\

\textbf{Keywords:} Schr\"{o}dinger operator, Neumann condition, Resonance eigenvalue, Perturbation theory.

\textbf{AMS Subject Classifications:} 47F05, 35P15

\end{abstract}
\newpage
\begin{center}
\large
Introduction
\end{center}
\normalsize
\par 
In this paper, we consider the Schr\"{o}dinger Operator with a matrix potential $V(x)$ defined  by the differential expression
    \begin{equation}\label{1}
    L\phi=-\Delta \phi+V\phi
    \end{equation}
     and the Neumann boundary condition
     \begin{equation}\label{2}
     \frac{\partial \phi }{\partial n} |_{\partial F}=0,
     \end{equation}
     in $L_2^m(F)$ where $F$ is the $d$ dimensional rectangle $F=[0,a_1]\times[0,a_2]\times\ldots \times [0,a_d],$ $\partial F$ is the  boundary of $F$, $m\geqslant 2, d\geqslant 2$,  $\frac{\partial}{\partial n}$ denotes differentiation along the outward normal of the boundary $\partial F$, $\boldsymbol{\Delta}$ is a diagonal $m \times m$ matrix whose diagonal elements are the scalar Laplace operators $\Delta=\frac{\partial ^2}{\partial {x_1}^2}+\frac{\partial ^2}{\partial {x_2}^2}+\ldots+\frac{\partial ^2}{\partial {x_d}^2}$ , $x=(x_1,x_2,\ldots,x_d)\in \boldsymbol{R}^d$, $V$ is a real valued symmetric matrix $V(x)=(v_{ij}(x)), i, j=1,2,\ldots,m, v_{ij}(x)\in L_2(F)$, that is, $V^T(x)=V(x).$
     \par We denote the operator defined by (1)-(2) by $L(V)$, and the eigenvalues and corresponding eigenfunctions of $L(V)$ by $\Lambda_{N}$ and $\Psi_{N}$, respectively.
\par We assume that the Fourier coefficients $v_{ij\gamma}$ of $v_{ij}(x)$ satisfy
\begin{equation}\label{coeff}
\sum_{\gamma\in\frac{\Gamma}{2}}\mid v_{ij\gamma}\mid^{2}(1+\mid
\gamma\mid^{2l})<\infty,
\end{equation}
for each $i,j=1,2,\ldots,m$,\;\;  $l>\frac{(d+20)(d-1)}{2}+d+3$
which implies
\begin{equation}\label{decomp}
v_{ij}(x)=\sum_{\gamma\in\Gamma^{+0}(\rho^{\alpha})}v_{ij\gamma}u_{\gamma}(x)+
O(\rho^{-p\alpha}),
\end{equation}
where $\Gamma^{+0}(\rho^{\alpha})=\{\gamma\in\frac{\Gamma}{2}:0\leq\mid
\gamma\mid<\rho^{\alpha}\}$, $p=l-d$, $\alpha<\frac{1}{d+20}$,
$\rho$ is a large parameter and $O(\rho^{-p\alpha})$ is a function
in $L_{2}(F)$ with norm of order $\rho^{-p\alpha}$. Furthermore, by \eqref{coeff}, we have
\begin{equation}\label{decompsum}
    M_{ij}\equiv\sum_{\gamma\in\frac{\Gamma}{2}}\mid v_{ij\gamma}\mid<\infty,
\end{equation}
for all $i,j=1,2,\ldots,m$.
\newpage
\par 
We denote the operator defined by the differential expression \eqref{1} when $V(x)=V_0$, where $V_0=\int_{F}V(x)dx$ and the boundary condition \eqref{2}, by $L(V_0).$
\par As in \cite{Veliev1}, \cite{Veliev4}, \cite{Veliev5}, we divide $R^{d}$ into two domains:
Resonance and Non-resonance domains. \\In order to define these
domains,
let us introduce the following sets:
\par Let $\alpha<\frac{1}{d+20}$, $\alpha_{k}=3^{k}\alpha$,
$k=1,2,\ldots,d-1$ and
\begin{center}
$V_{b}(\rho^{\alpha_{1}})\equiv\left\{x\in R^{d}:\quad\left\lvert\lvert
x \lvert
^{2}- \lvert x+b \lvert^{2}\right \lvert<\rho^{\alpha_{1}}\right\}$,
\end{center}
\begin{center}
$E_{1}(\rho^{\alpha_{1}},p)\equiv\bigcup\limits_{{b}\in\Gamma(p\rho^{\alpha})}$
$V_{b}(\rho^{\alpha_{1}})$,
\end{center}
\begin{center}
$U(\rho^{\alpha_{1}},p)\equiv R^{d}\setminus
E_{1}(\rho^{\alpha_{1}},p)$,
\end{center}
\begin{center}
$E_k(\rho^{\alpha_k},p)=\bigcup\limits_{\gamma_1,\gamma_2,\ldots,\gamma_k\in \Gamma(p\rho^{\alpha})}\left(\bigcap\limits_{i=1}^kV_{\gamma_i}(\rho^{\alpha_k})\right)$,
\end{center}
where $\Gamma(p\rho^{\alpha})\equiv\left\{b\in\frac{\Gamma}{2}:0<\mid
b\mid<p\rho^{\alpha}\right\}$ and the intersection $\bigcap\limits_{i=1}^kV_{\gamma_i}(\rho^{\alpha_k}) $ in $E_k$ is taken over $\gamma_1,\gamma_2,\ldots,\gamma_k$ which are linearly independent vectors and the length of $\gamma_i$ is not greater than the length of the other vector in $\Gamma \bigcap \gamma_i\boldsymbol{R}.$ The set $U(\rho^{\alpha_{1}},p)$ is said to be a non-resonance domain, and the eigenvalue $\lvert \gamma \lvert^2$ is called a non-resonance eigenvalue if $\gamma\in U(\rho^{\alpha_{1}},p).$ The domains $V_{b}(\rho^{\alpha_{1}})$, for $b\in \Gamma(p\rho^{\alpha})$ are called resonance domains and the eigenvalue $\lvert \gamma \lvert^2$ is a resonance eigenvalue if $\gamma \in V_{b}(\rho^{\alpha_{1}})$.
\par As noted in \cite{Veliev4} and \cite{Veliev5}, the domain $V_{b}(\rho^{\alpha_{1}})\setminus E_2$, called a single resonance domain, has asymptotically full measure on $V_{b}(\rho^{\alpha_{1}}),$ that is,
\begin{align*}
\frac{\mu\left(\left(V_{b}(\rho^{\alpha_{1}})\setminus E_2\right) \bigcap B(q)\right) }{\mu\left(V_{b}(\rho^{\alpha_{1}})\bigcap B(q) \right)}\to 1,\text{ as  } \rho\to \infty,
\end{align*}
where $B(\rho)=\left\{x\in \boldsymbol{R}^d:\lvert x \lvert =\rho \right\}$, if
\begin{align}\label{alfa}
2\alpha_2-\alpha_1+(d+3)\alpha<1 \text{  and  } \alpha_2>2\alpha_1,
\end{align}
hold. Since $\alpha<\frac{1}{d+20}$, the conditions in \eqref{alfa} hold.
\par In \cite{coskan1}, the asymptotic formulas of arbitrary order for the non-resonance eigenvalues of the Schr\"{o}dinger operator $L(V)$ with the condition \eqref{coeff} are obtained.
 \par In this paper, we obtain the  asymptotic formulas for the resonance eigenvalues.
 The main result of this paper is to find connection between the eigenvalues of the Schr\"{o}dinger operator corresponding to a single resonance domain and the eigenvalues of the Sturm-Liouville operators.
 \par
 The organization of this paper is as follows:
 \par
 In the first part of this paper, we obtained asymptotic formulas depend not only the eigenvalues of the matric $C(\gamma,\gamma_1,\gamma_2,\ldots,\gamma_k)$ but also on the eigenvalues of the matrix $V_0$.
 \par
 In the second part, we investigated the perturbation of the eigenvalue $|\gamma|^2$ when $\gamma\in V_{\delta}(\rho^{\alpha_1})\backslash E_2.$ We assume that $\gamma=(\gamma^1,\gamma^2,\ldots,\gamma^d)\notin V_{e_k}(\rho^{\alpha_1})$ for $k=1,2,\ldots,d,$ where 
 $e_1=\left(\frac{\pi}{a_1},0,\ldots,0\right), \\e_2=\left(0,\frac{\pi}{a_2},\ldots,0\right), \ldots,e_d=\left(0,\ldots,\frac{\pi}{a_d}\right).$ This relation implies that $\gamma^k>\frac{1}{3\rho^{\alpha_1}},$ $\forall k=1,2,\ldots,d.$
\par
The case $\delta=e_i$, $i=1,2,\ldots,d$ was considered in \cite{Karakilicsetenay}, where a different method was used. Since there is no intersection between two investigated methods of the cases $\delta=e_i$ and $\delta\neq e_i$, $i=1,2,\ldots,d$, we study them in different papers.
\section{Asymptotic Formulas for the Eigenvalues in the Resonance Domain}

\hspace{0.5 cm} We assume that $\gamma\notin
V_{e_{k}}(\rho^{\alpha_{1}})$ for $k=1,2,\ldots,d$ where
$e_{1}=(\frac{\pi}{a_{1}},0,\ldots,0),$\\
$e_{2}=(0,\frac{\pi}{a_{2}},0,\ldots,0),\ldots,
e_{d}=(0,\ldots,0,\frac{\pi}{a_{d}})$.

\par Let $\mid\gamma\mid^{2}$ be a resonance
eigenvalue of the operator $L(0)$, that is,
$\gamma\in(\bigcap\limits_{i=1}^{k}V_{\gamma_{i}}(\rho^{\alpha_{k}}))\setminus
E_{k+1}$, $k=1,2,\ldots,d-1$, $\gamma_{i}\neq e_{j}$ for
$i=1,2,\ldots,k$ and $j=1,2,\ldots,d-1$.

We define the following sets
\begin{equation*}
B_{k}(\gamma_{1},\gamma_{2},\ldots,\gamma_{k})=\{b:
b=\sum\limits_{i=1}^{k}n_{i}\gamma_{i},n_{i}\in Z, \mid
b\mid<\frac{1}{2}\rho^{\frac{1}{2}\alpha_{k+1}}\},
\end{equation*}
\begin{equation*}
B_{k}(\gamma)=\gamma+B_{k}(\gamma_{1},\gamma_{2},\ldots,\gamma_{k})
=\{\gamma+b: b\in B_{k}(\gamma_{1},\gamma_{2},\ldots,\gamma_{k})\},
\end{equation*}
\begin{equation*}
B_{k}(\gamma,p_{1})=B_{k}(\gamma)+\Gamma(p_{1}\rho^{\alpha}).
\end{equation*}

Let $h_{\tau}$, $\tau=1,2,\ldots,b_{k}$ denote the vectors of
$B_{k}(\gamma,p_{1})$, $b_{k}$ the number of the vectors in
$B_{k}(\gamma,p_{1})$. We define the $mb_{k}\times mb_{k}$ matrix
$C=C(\gamma,\gamma_{1},\ldots,\gamma_{k})$ by
\begin{equation}\label{appmat}
C=\left[
  \begin{array}{cccc}
    \mid h_{1}\mid^{2} I & V_{h_{1}-h_{2}} & \cdots & V_{h_{1}-h_{b_{k}}}\\
    V_{h_{2}-h_{1}} & \mid h_{2}\mid^{2} I & \cdots& V_{h_{2}-h_{b_{k}}} \\
    \vdots &  &  &  \\
    V_{h_{b_{k}}-h_{1}} & V_{h_{b_{k}}-h_{2}} & \cdots & \mid h_{b_{k}}\mid^{2} I\\
  \end{array}
\right],
\end{equation}
where $V_{h_{\tau}-h_{\xi}}$, $\tau,\xi=1,2,\ldots,b_{k}$ are the
$m\times m$ matrices defined by
\begin{equation}\label{matrixdecompold}
V_{h_{\tau}-h_{\xi}}=\left[
  \begin{array}{cccc}
    v_{11h_{\tau}-h_{\xi}} & v_{12h_{\tau}-h_{\xi}} & \cdots & v_{1mh_{\tau}-h_{\xi}} \\
    v_{21h_{\tau}-h_{\xi}} & v_{22h_{\tau}-h_{\xi}} & \cdots & v_{2mh_{\tau}-h_{\xi}} \\
    \vdots &  &  &  \\
    v_{m1h_{\tau}-h_{\xi}} & v_{m2h_{\tau}-h_{\xi}} & \cdots & v_{mmh_{\tau}-h_{\xi}} \\
  \end{array}
\right].
\end{equation}

The analogues of the following lemma can be found in \cite{Karakilic3}. (see Theorem 3.1.1.)
\begin{lemma}\label{lemres_res1}
Let $\mid\gamma\mid^{2}$ be a resonance eigenvalue of the operator
$L(0)$, that is,
$\gamma\in(\bigcap\limits_{i=1}^{k}V_{\gamma_{i}}(\rho^{\alpha_{k}}))\setminus
E_{k+1}$, $k=1,2,\ldots,d-1$ where $\mid\gamma\mid\sim\rho$,
$\Lambda_{N}$ an eigenvalue of the operator $L(V)$ satisfying
\begin{equation}\label{lemres_res1eq1}
\mid\Lambda_{N}-\mid\gamma\mid^{2}\mid<\frac{1}{2}\rho^{\alpha_{1}}.
\end{equation}
Then
\begin{equation}\label{lemres_res1eq2}
    \mid\Lambda_{N}-\mid h_{\tau}-\gamma\prime-\gamma_{1}-\gamma_{2}-\cdots-\gamma_{s}\mid^{2}\mid>\frac{1}{6}\rho^{\alpha_{k+1}}
\end{equation}
where $h_{\tau}\in B_{k}(\gamma,p_{1})$,
$h_{\tau}-\gamma\prime\notin B_{k}(\gamma,p_{1})$,
$\gamma\prime\in\Gamma(\rho^{\alpha})$,
$\gamma_{i}\in\Gamma(\rho^{\alpha})$, $i=1,2,\ldots,s$,\\
$s=0,1,\ldots,p_{1}-1$.
\end{lemma}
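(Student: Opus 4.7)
The plan is to derive \eqref{lemres_res1eq2} from a geometric lower bound on $\bigl||\gamma|^2 - |w|^2\bigr|$, where $w := h_\tau - \gamma' - \gamma_1 - \cdots - \gamma_s$. By the triangle inequality and \eqref{lemres_res1eq1}, $\bigl|\Lambda_N - |w|^2\bigr| \geq \bigl||\gamma|^2 - |w|^2\bigr| - \tfrac{1}{2}\rho^{\alpha_1}$, so, since $\alpha_{k+1}=3^{k+1}\alpha$ is strictly larger than $\alpha_1 = 3\alpha$, it suffices to show $\bigl||\gamma|^2 - |w|^2\bigr| > \tfrac{1}{6}\rho^{\alpha_{k+1}} + \tfrac{1}{2}\rho^{\alpha_1}$.

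Next, I would write $h_\tau = \gamma + b$ with $b = \sum_{i=1}^{k} n_i\gamma_i \in B_k(\gamma_1,\ldots,\gamma_k)$ and $|b| < \tfrac{1}{2}\rho^{\frac{1}{2}\alpha_{k+1}}$, set $\mu := \gamma' + \gamma_1 + \cdots + \gamma_s \in \Gamma(p_1\rho^\alpha)\cup\{0\}$, and put $c := b-\mu$, so that $w = \gamma+c$ and
\[
|\gamma|^2 - |w|^2 = -2\gamma\cdot c - |c|^2.
\]
The analysis then rests on two geometric inputs: (i) from $\gamma \in V_{\gamma_i}(\rho^{\alpha_k})$ one has $\bigl|2\gamma\cdot\gamma_i+|\gamma_i|^2\bigr| < \rho^{\alpha_k}$, hence $\gamma\cdot\gamma_i = O(\rho^{\alpha_k})$ for every $i=1,\ldots,k$; and (ii) from $\gamma \notin E_{k+1}$, for every $\nu \in \Gamma(p\rho^\alpha)$ that is linearly independent from $\{\gamma_1,\ldots,\gamma_k\}$, one has $\bigl|2\gamma\cdot\nu + |\nu|^2\bigr| \geq \rho^{\alpha_{k+1}}$.

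The argument then splits into two cases. If $\mu$ has a component outside $\mathrm{span}(\gamma_1,\ldots,\gamma_k)$, that component plays the role of a $(k+1)$-th resonance direction and (ii) yields a lower bound of order $\rho^{\alpha_{k+1}}$, with the in-span contributions controlled by (i) and the short norm of $b$. If instead $\mu$ lies in $\mathrm{span}(\gamma_1,\ldots,\gamma_k)$, then the hypothesis $h_\tau-\gamma'\notin B_k(\gamma,p_1)$ becomes essential: it rules out the decomposition of $b-\gamma'$ as an element of $B_k(\gamma_1,\ldots,\gamma_k)$ plus an element of $\Gamma(p_1\rho^\alpha)\cup\{0\}$, forcing either $|c|$ to exceed the threshold $\tfrac{1}{2}\rho^{\frac{1}{2}\alpha_{k+1}}$ or the expression $\bigl|2\gamma\cdot c+|c|^2\bigr|$ to be large directly. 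The main obstacle is precisely this second case---quantitatively converting the non-membership statement $h_\tau-\gamma' \notin B_k(\gamma,p_1)$ into a lower bound of the correct order---after which the separation $\alpha_{k+1} = 3\alpha_k$ comfortably closes the estimate, following the scheme of the analogue in \cite{Karakilic3}, now adapted to the matrix-potential and Neumann setting.
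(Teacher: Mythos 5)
Your overall frame matches the paper's: reduce \eqref{lemres_res1eq2} to a lower bound on $\bigl||\gamma|^{2}-|w|^{2}\bigr|$ via \eqref{lemres_res1eq1} and the triangle inequality, then split according to whether the perturbation of $\gamma$ lies in $P=\mathrm{span}\{\gamma_{1},\ldots,\gamma_{k}\}$ or not; your out-of-span case is essentially the paper's Case 2 (from $\gamma\notin E_{k+1}$ one gets $\bigl||\gamma+a|^{2}-|\gamma|^{2}\bigr|>\rho^{\alpha_{k+1}}$ for a short $a\notin P$, and the in-span cross terms $2a\cdot b$ and $|\gamma+b|^{2}-|\gamma|^{2}$ are of lower order). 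But the in-span case is exactly where you stop: you say the "main obstacle" is converting $h_{\tau}-\gamma'\notin B_{k}(\gamma,p_{1})$ into a quantitative bound and you only gesture at the literature, and moreover your proposed dichotomy there ("either $|c|$ exceeds the threshold or $|2\gamma\cdot c+|c|^{2}|$ is large directly") is not the mechanism that closes it. That is a genuine gap, since this case is the substance of the lemma.

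The paper resolves it in three short steps you are missing. First, $a_{s}=h_{\tau}-\gamma'-\gamma_{1}-\cdots-\gamma_{s}\notin B_{k}(\gamma)$: if it were in $B_{k}(\gamma)$, then adding back $\gamma_{1}+\cdots+\gamma_{s}$, which lies in $\Gamma(p_{1}\rho^{\alpha})\cup\{0\}$ because $s\leq p_{1}-1$ and each $|\gamma_{i}|<\rho^{\alpha}$, would place $h_{\tau}-\gamma'$ back in $B_{k}(\gamma)+\Gamma(p_{1}\rho^{\alpha})=B_{k}(\gamma,p_{1})$, contradicting the hypothesis. Second, writing $a_{s}=\gamma+b+a$ with $b\in B_{k}$ and $a$ short, the in-span case ($a\in P$) gives $b+a\in P\setminus B_{k}$, and the definition of $B_{k}$ (integer combinations of the $\gamma_{i}$ of norm $<\tfrac12\rho^{\frac12\alpha_{k+1}}$) yields the quantitative bound $|b+a|\geq\tfrac12\rho^{\frac12\alpha_{k+1}}$. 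Third, this norm bound alone is not enough: one needs the orthogonal decomposition $\gamma=x+v$ with $v\in P$, $x\perp P$ and $|v|$ small ($<\rho^{\alpha_{1}}$ in the paper), which follows from $\gamma\in\bigcap_{i}V_{\gamma_{i}}(\rho^{\alpha_{k}})$, so that $\bigl||\gamma+b+a|^{2}-|\gamma|^{2}\bigr|=\bigl||v+b+a|^{2}-|v|^{2}\bigr|\geq|b+a|^{2}-2|b+a||v|>\tfrac15\rho^{\alpha_{k+1}}$. Your input (i) ($\gamma\cdot\gamma_{i}=O(\rho^{\alpha_{k}})$) could substitute for this projection estimate only if you also bounded the integer coefficients of $b+a$, which you do not address. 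A smaller slip: since $h_{\tau}\in B_{k}(\gamma,p_{1})$ rather than $B_{k}(\gamma)$, your decomposition $h_{\tau}=\gamma+b$ drops a $\Gamma(p_{1}\rho^{\alpha})$-component, so the in-span/out-of-span dichotomy must be applied to the whole short part (that component minus your $\mu$), not to $\mu$ alone.
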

\begin{proof} 
The relations $h_{\tau}\in B_{k}(\gamma,p_{1})$, $h_{\tau}-\gamma\prime\notin B_{k}(\gamma,p_{1})$,
$2p_{1}>p$ and
$\mid\gamma\prime\mid,\mid\gamma_{1}\mid,\ldots,$\\$\mid\gamma_{p_{1}-1}\mid<\rho^{\alpha}$
imply that
\begin{equation*}
    a_{s}=h_{\tau}-\gamma\prime-\gamma_{1}-\gamma_{2}-\ldots-\gamma_{s}\in B_{k}(\gamma,p_{1})\setminus B_{k}(\gamma)
\end{equation*}
for $s=0,1,\ldots,p_{1}-1$. To prove the inequality
\eqref{lemres_res1eq2}, we use the decomposition
\begin{equation*}
    a_{s}=\gamma+b+a,
\end{equation*}
where $b\in B_{k}$ and $a\in \Gamma(p_{1}\rho^{\alpha})$. So $\mid
b\mid<\frac{1}{2}\rho^{\frac{1}{2}\alpha_{k+1}}$ and $\mid
a\mid<p_{1}\rho^{\alpha}$. First we show that
\begin{equation}\label{lemres_res1eq3}
    \mid\mid\gamma+b+a\mid^{2}-\mid\gamma\mid^{2}\mid>\frac{1}{5}\rho^{\alpha_{k+1}}.
\end{equation}
To prove the inequality \eqref{lemres_res1eq3}, we consider the following cases.\\
Case 1: If $a\in P=span\{\gamma_{1},\gamma_{2},\ldots,\gamma_{k}\}$,
then $a+b\in P$ and $\gamma+b+a\notin B_{k}(\gamma)$ imply that
$a+b\in P\setminus B_{k}$, that is,
\begin{equation*}
    \mid a+b\mid\geq\frac{1}{2}\rho^{\frac{1}{2}\alpha_{k+1}}.
\end{equation*}
Now, if we consider the orthogonal decomposition of $\gamma$ as
$\gamma=x+v$ where $v\in P$ and $x\bot v$, then by using $x\cdot
a=x\cdot b=x\cdot v=0$, $\mid
a+b\mid\geq\frac{1}{2}\rho^{\frac{1}{2}\alpha_{k+1}}$ and $\mid
v\mid<\rho^{\alpha_{1}}$, we get
\begin{eqnarray*}
  \mid\mid\gamma+b+a\mid^{2}-\mid\gamma\mid^{2}\mid &=& \mid\mid x+v+b+a\mid^{2}-\mid x+v\mid^{2}\mid \\
   &=&  \mid\mid v+b+a\mid^{2}-\mid v\mid^{2}\mid >\frac{1}{5}\rho^{\frac{1}{2}\alpha_{k+1}}.
\end{eqnarray*}
Thus for Case 1 the inequality \eqref{lemres_res1eq3} is true.\\
Case 2: If $a\notin P$, then by definition of
$\gamma\in(\bigcap\limits_{i=1}^{k}V_{\gamma_{i}}(\rho^{\alpha_{k}}))\setminus
E_{k+1}$, we have
\begin{equation}\label{lemres_res1eq4}
    \mid\mid\gamma+a\mid^{2}-\mid\gamma\mid^{2}\mid>\rho^{\alpha_{k+1}}.
\end{equation}
Consider the difference
\begin{equation*}
    \mid\mid\gamma+b+a\mid^{2}-\mid\gamma\mid^{2}\mid=
    \mid\mid\gamma+b+a\mid^{2}-\mid\gamma+b\mid^{2}+\mid\gamma+b\mid^{2}-\mid\gamma\mid^{2}\mid,
\end{equation*}
where
\begin{center}
    $d_{1}=\mid\gamma+b+a\mid^{2}-\mid\gamma+b\mid^{2}$, $d_{2}=\mid\gamma+b\mid^{2}-\mid\gamma\mid^{2}$.
\end{center}
Since
\begin{equation*}
    d_{1}=\mid\gamma+b+a\mid^{2}-\mid\gamma+b\mid^{2}=
    \mid\gamma+a\mid^{2}-\mid\gamma\mid^{2}+2a\cdot b,
\end{equation*}
by the inequality \eqref{lemres_res1eq4} and $\mid2a\cdot
b\mid\leq2\mid a\mid\mid
b\mid<p_{1}\rho^{\alpha}\rho^{\frac{1}{2}\alpha_{k+1}}<\frac{1}{3}\rho^{\alpha_{k+1}}$,
\begin{equation*}
    \mid d_{1}\mid>\frac{2}{3}\rho^{\alpha_{k+1}}.
\end{equation*}
On the other hand, using
$\mid\gamma+b+a\mid^{2}-\mid\gamma\mid^{2}=\mid v+b+a\mid^{2}-\mid
v\mid^{2}$, and taking $a=0$, we get
\begin{equation*}
    d_{2}=\mid\gamma+b\mid^{2}-\mid\gamma\mid^{2}=\mid v+b\mid^{2}-\mid v\mid^{2}=
    (\mid v+b\mid-\mid v\mid)(\mid v+b\mid+\mid v\mid)
\end{equation*}
from which it follows that
\begin{equation*}
    \mid d_{2}\mid<\frac{1}{3}\rho^{\alpha_{k+1}}.
\end{equation*}
Then
\begin{equation*}
    \mid\mid d_{1}\mid-\mid d_{2}\mid\mid>\frac{1}{5}\rho^{\alpha_{k+1}}.
\end{equation*}
So in any case the inequality \eqref{lemres_res1eq3} is true.
Therefore, the inequalities \eqref{lemres_res1eq1} and
\eqref{lemres_res1eq3} imply that
\begin{equation*}
    \mid\Lambda_{N}-\mid\gamma+b+a\mid^{2}\mid=
    \mid\Lambda_{N}-\mid\gamma\mid^{2}-\mid\gamma+b+a\mid^{2}+\mid\gamma\mid^{2}\mid>\frac{1}{6}\rho^{\alpha_{k+1}}.
\end{equation*}
\end{proof}

\begin{theorem}\label{theorem_res1}
Let $\mid\gamma\mid^{2}$ be a resonance eigenvalue of the operator
$L(0)$, that is,
$\gamma\in(\bigcap\limits_{i=1}^{k}V_{\gamma_{i}}(\rho^{\alpha_{k}}))\setminus
E_{k+1}$, $k=1,2,\ldots,d-1$ where $\mid\gamma\mid\sim\rho$,
$\lambda_{i}$ an eigenvalue of the matrix $V_{0}$, and $\Lambda_{N}$
an eigenvalue of the operator $L(V)$ satisfying
\begin{equation}\label{theorem_res1eq100}
\mid\Lambda_{N}-\mid\gamma\mid^{2}\mid<\frac{1}{2}\rho^{\alpha_{1}}
\end{equation}
and
\begin{equation}\label{theorem_res1eq111}
\mid<\Phi_{\gamma,j},\Psi_{N}>\mid>c_{17}\rho^{-c\alpha}.
\end{equation}
Then there exists an eigenvalue $\eta_{s}(\gamma)$,
$s=1,2,\ldots,mb_{k}$ of the matrix $C$ such that
\begin{equation*}
\Lambda_{N}=\lambda_{i}+\eta_{s}(\gamma)+O(\rho^{-(p-c-\frac{d}{4}3^{d})\alpha}).
\end{equation*}
\end{theorem}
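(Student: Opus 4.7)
The plan is to apply the iterated binding formula technique of \cite{Karakilic3}, \cite{Veliev4}, \cite{Veliev5}, adapted to the matrix-valued Neumann setting. First I fix an orthonormal basis $\omega_1,\ldots,\omega_m$ of $\mathbb{R}^m$ consisting of eigenvectors of $V_0$ with eigenvalues $\lambda_1,\ldots,\lambda_m$, and take $\Phi_{\gamma,j}(x)=u_\gamma(x)\omega_j$, which diagonalize $L(V_0)$ with eigenvalues $|\gamma|^2+\lambda_j$. Setting $b(N,\gamma,j):=\langle\Phi_{\gamma,j},\Psi_N\rangle$ and pairing $L(V)\Psi_N=\Lambda_N\Psi_N$ with $\Phi_{\gamma,i}$ yields the binding identity
\[
(\Lambda_N-|\gamma|^2-\lambda_i)\,b(N,\gamma,i)=\langle(V-V_0)\Phi_{\gamma,i},\Psi_N\rangle,
\]
and, after invoking \eqref{decomp} and the Neumann product-of-cosines rule, the right-hand side rewrites as a finite sum of terms of the form $(V'_{\gamma'})_{ij'}\,b(N,\gamma\pm\gamma',j')$ plus an $O(\rho^{-p\alpha})$ tail, where $V'_{\gamma'}$ denotes the Fourier matrix coefficient of $V$ expressed in the basis $\{\omega_j\}$.

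Next I iterate this identity $p_1$ times, with $p_1$ of order $p$. After iteration, $b(N,\gamma,i)$ becomes a sum over chains $\gamma=a_0\to a_1\to\cdots\to a_{p_1}$ whose steps lie in $\Gamma(\rho^{\alpha})$, weighted by products of Fourier coefficients divided by products of denominators $(\Lambda_N-|a_t|^2-\lambda_{j_t})$. The chains are then partitioned into those that remain in $B_k(\gamma,p_1)$ throughout and those that leave it at some step. For the latter class, Lemma \ref{lemres_res1} provides the denominator bound $|\Lambda_N-|a_t|^2|>\tfrac{1}{6}\rho^{\alpha_{k+1}}$ at the exit step, which dominates the $O(1)$ shift $\lambda_{j_t}$ since $\rho^{\alpha_{k+1}}\gg 1$; combining this gain $p_1$-fold with the absolute summability \eqref{decompsum} and a standard count on chains produces a total contribution of order $\rho^{-(p-c-\frac{d}{4}3^{d})\alpha}$, the factor $\frac{d}{4}3^{d}$ encoding the worst case $\alpha_{k+1}/\alpha\le 3^{d}$ together with the branching coming from the Neumann cosine-product rule.

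The surviving chains, wholly contained in $B_k(\gamma,p_1)$, resum to a finite linear system of the form $(\Lambda_N-\lambda_i)\,\mathbf{b}=C'\,\mathbf{b}+O(\rho^{-(p-c-\frac{d}{4}3^{d})\alpha})$, where $\mathbf{b}=(b(N,h_\tau,j))_{\tau,j}\in\mathbb{C}^{mb_k}$ and $C'=(I_{b_k}\otimes U^T)\,C\,(I_{b_k}\otimes U)$ is the unitary transform of $C(\gamma,\gamma_1,\ldots,\gamma_k)$ by the change of basis $U$ (the matrix with columns $\omega_1,\ldots,\omega_m$) on each $h_\tau$-block; in particular $\sigma(C')=\sigma(C)=\{\eta_s(\gamma)\}_{s=1}^{mb_k}$. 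The hypothesis \eqref{theorem_res1eq111} guarantees $\|\mathbf{b}\|\ge c_{17}\rho^{-c\alpha}$, and the resolvent bound $\|(C'-zI)^{-1}\|=\operatorname{dist}(z,\sigma(C))^{-1}$ applied with $z=\Lambda_N-\lambda_i$ then produces an eigenvalue $\eta_s(\gamma)$ of $C$ with $|\Lambda_N-\lambda_i-\eta_s(\gamma)|=O(\rho^{-(p-c-\frac{d}{4}3^{d})\alpha})$.

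The hard part will be the bookkeeping in the iteration: tracking how the $V_0$-eigenvector index $j_t$ shuffles at each step (since $V-V_0$ does not commute with $V_0$), controlling the combinatorial count of chains against the absolute-summability estimate \eqref{decompsum}, and verifying that the Neumann cosine-product rule does not inflate the error exponent beyond $-(p-c-\frac{d}{4}3^{d})\alpha$. This is essentially the matrix-valued Neumann analogue of Theorem 3.1.1 of \cite{Karakilic3}.
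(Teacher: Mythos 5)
Your overall strategy is the paper's: the binding formula, the splitting of the Fourier sum according to whether $h_{\tau}-\gamma\prime$ stays in $B_{k}(\gamma,p_{1})$, Lemma \ref{lemres_res1} plus the summability \eqref{decompsum} to kill the outside terms, the count $b_{k}=O(\rho^{\frac{d}{2}3^{d}\alpha})$, the lower bound coming from \eqref{theorem_res1eq111}, and a resolvent estimate at the end. But there is a genuine gap at the decisive step, namely how the single scalar $\lambda_{i}$ gets split off while the matrix $C$ stays free of $V_{0}$. Because you run the binding formula with respect to $L(V_{0})$ in the eigenbasis $\{\omega_{j}\}$, the block system your derivation actually produces has diagonal blocks $\mid h_{\tau}\mid^{2}I+\mathrm{diag}(\lambda_{1},\ldots,\lambda_{m})$ and off-diagonal blocks $U^{T}V_{h_{\tau}-h_{\xi}}U$; that is, the matrix appearing is $(I\otimes U^{T})\,(C+I_{b_{k}}\otimes V_{0})\,(I\otimes U)$, not $\lambda_{i}I+C'$. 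Its spectrum is $\sigma(C+I\otimes V_{0})$, which is \emph{not} of the form $\{\lambda_{i}+\eta_{s}(\gamma)\}$ unless $V_{0}$ commutes with all the blocks $V_{h_{\tau}-h_{\xi}}$. So your asserted system $[(\Lambda_{N}-\lambda_{i})I-C']\mathbf{b}=O(\rho^{-(p-c-\frac{d}{4}3^{d})\alpha})$ does not follow from your setup, and the resolvent argument then yields an eigenvalue of $C+I\otimes V_{0}$ rather than the claimed $\lambda_{i}+\eta_{s}(\gamma)$. The paper spends equations \eqref{theorem_res1eq6}--\eqref{theorem_res1eq10} exactly on this point: it works in the standard basis, isolates the $\gamma\prime=0$ term as $V_{0}A(N,h_{\tau})$, multiplies each block equation by the single eigenvector $\omega_{i}$ to replace $V_{0}$ by the scalar $\lambda_{i}$, and then invokes a Parseval relation over $\{\omega_{i}\}$ to upgrade the scalar relations to the vector equation \eqref{theorem_res1eq10}, which stacks into \eqref{theorem_res1eq12}. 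Your proposal has no substitute for this step, and it is precisely where the content of the theorem lies.

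A secondary structural problem: you expand the identity $p_{1}$ times for \emph{all} terms and then classify chains by first exit from $B_{k}(\gamma,p_{1})$. Chains that exit at a late step pick up only the remaining few large denominators, so the $p_{1}$-fold gain you invoke is not available for them, and the wholly-inside chains of length $p_{1}$ do not ``resum'' to the one-step linear system. The paper avoids both issues by iterating only the terms with $h_{\tau}-\gamma\prime\notin B_{k}(\gamma,p_{1})$ (Lemma \ref{lemres_res1} then supplies $p_{1}$ large denominators for that whole branch, giving \eqref{theorem_res1eq2}), while the inside terms are kept after a single step and directly form the system. Finally, a minor attribution error: the loss $\rho^{\frac{d}{4}3^{d}\alpha}$ comes from $\sqrt{mb_{k}}$ when one passes from the componentwise $O(\rho^{-p\alpha})$ errors to the norm of the stacked $mb_{k}$-vector, and the loss $\rho^{c\alpha}$ from dividing by $\mid A\mid\geq c_{18}\rho^{-c\alpha}$; it is not produced by the chain combinatorics or the cosine-product rule.
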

\begin{proof}
We give the proof by using the same consideration as in Karak{\i}l{\i}\d{c} (2004). 
The binding formula 
\begin{equation}\label{binding}
(\Lambda_N-|h_{\tau}|^2)\langle\Psi_N,\Phi_{h_{\tau},j}\rangle=\langle\Psi_N,V\Phi_{h_{\tau},j}\rangle
\end{equation}
for any $h_{\tau}\in
B_{k}(\gamma,p_{1})$, $\tau=1,2,\ldots,b_{k}$ and the decomposition
\begin{eqnarray}\label{decomp11}
\nonumber V(x)\Phi_{h_{\tau},j}(x) &=&
(\sum\limits_{\gamma\prime\in\Gamma^{+0}(\rho^{\alpha})}v_{1j\gamma\prime}u_{h_{\tau}+\gamma\prime}(x),\ldots,
\sum\limits_{\gamma\prime\in\Gamma^{+0}(\rho^{\alpha})}v_{mj\gamma\prime}u_{h_{\tau}+\gamma\prime}(x))
+O(\rho^{-p\alpha})\\
&=&
\sum\limits_{i=1}^{m}\sum\limits_{\gamma\prime\in\Gamma^{+0}(\rho^{\alpha})}
v_{ij\gamma\prime}\Phi_{h_{\tau}+\gamma\prime,i}(x)
+O(\rho^{-p\alpha}).
\end{eqnarray}
give
\begin{equation}\label{theorem_res1eq1}
(\Lambda_{N}-\mid h_{\tau}\mid^{2})<\Psi_{N},\Phi_{h_{\tau},j}>=
\sum\limits_{i=1}^{m}\sum\limits_{\gamma\prime\in\Gamma^{+0}(\rho^{\alpha})}
v_{ij\gamma\prime}<\Psi_{N},\Phi_{h_{\tau}-\gamma\prime,i}>
+O(\rho^{-p\alpha}).
\end{equation}
We first show that
\begin{equation}\label{theorem_res1eq2}
   O(\rho^{-p\alpha})=\sum\limits_{i=1}^{m}\sum\limits_{\gamma\prime\in\Gamma(\rho^{\alpha})
   \atop{h_{\tau}-\gamma\prime\notin B_{k}(\gamma,p_{1})}}
    v_{ij\gamma\prime}<\Psi_{N},\Phi_{h_{\tau}-\gamma\prime,i}>
\end{equation}
for any $j=1,2,\ldots,m$. Here we remark that $\gamma\prime\neq0$.
If it were the case, then we would have from
$h_{\tau}-\gamma\prime\notin B_{k}(\gamma,p_{1})$ that $h_{\tau}\notin B_{k}(\gamma,p_{1})$ which is a contradiction.\\
Since $\Lambda_{N}$ satisfies the inequality
\eqref{theorem_res1eq100}, by Lemma \ref{lemres_res1}, we have
$\mid\Lambda_{N}-\mid
h_{\tau}-\gamma\prime\mid^{2}\mid>\frac{1}{6}\rho^{\alpha_{k+1}}$.
Using this and the decomposition \eqref{theorem_res1eq1} for
$h_{\tau}-\gamma\prime\notin B_{k}(\gamma,p_{1})$, it follows that
\begin{eqnarray*}
 &&  \sum\limits_{i=1}^{m}\sum\limits_{\gamma\prime\in\Gamma(\rho^{\alpha})
     \atop{h_{\tau}-\gamma\prime\notin B_{k}(\gamma,p_{1})}}
    v_{ij\gamma\prime}<\Psi_{N},\Phi_{h_{\tau}-\gamma\prime,i}>\\
    &=& \sum\limits_{i=1}^{m}\sum\limits_{\gamma\prime\in\Gamma(\rho^{\alpha})
    \atop{h_{\tau}-\gamma\prime\notin B_{k}(\gamma,p_{1})}}
    \frac{v_{ij\gamma\prime}}{\Lambda_{N}-\mid h_{\tau}-\gamma\prime\mid^{2}}
    \sum\limits_{i_{1}=1}^{m}\sum\limits_{\gamma_{1}\in\Gamma(\rho^{\alpha})
    \atop{h_{\tau}-\gamma\prime\notin B_{k}(\gamma,p_{1})}}
    v_{i_{1}i\gamma_{1}}<\Psi_{N},\Phi_{h_{\tau}-\gamma\prime-\gamma_{1},i_{1}}>+ O(\rho^{-p\alpha}) .
\end{eqnarray*}
In this manner, iterating $p_{1}$ times, we get
\begin{eqnarray*}
&&\sum\limits_{i=1}^{m}\sum\limits_{\gamma\prime\in\Gamma(\rho^{\alpha})\atop{h_{\tau}-\gamma\prime\notin
B_{k}(\gamma,p_{1})}}
    v_{ij\gamma\prime}<\Psi_{N},\Phi_{h_{\tau}-\gamma\prime,i}>\\
  &=& \sum\limits_{i,i_{1},i_{2},\ldots,i_{p_{1}}=1}^{m}
  \sum\limits_{\gamma\prime,\gamma_{1},\gamma_{2},\ldots,\gamma_{p_{1}}\in\Gamma(\rho^{\alpha})
  \atop{h_{\tau}-\gamma\prime\notin B_{k}(\gamma,p_{1})}}\\
  &&\frac{v_{ij\gamma\prime}v_{i_{1}i\gamma_{1}}\ldots v_{i_{p_{1}}i_{p_{1}-1}\gamma_{p_{1}}}<\Psi_{N},\Phi_{h_{\tau}-\gamma\prime-\gamma_{1}-\cdots-\gamma_{p_{1}},i_{p_{1}}}>}
  {(\Lambda_{N}-\mid h_{\tau}-\gamma\prime\mid^{2})(\Lambda_{N}-\mid h_{\tau}-\gamma\prime-\gamma_{1}\mid^{2})
  \ldots (\Lambda_{N}-\mid h_{\tau}-\gamma\prime-\gamma_{1}-\cdots-\gamma_{p_{1}-1}\mid^{2})} 
  +O(\rho^{-p\alpha}).
\end{eqnarray*}
Taking norm of both sides of the last equality, using Lemma
\ref{lemres_res1}, the relation 
\begin{eqnarray}\label{M}
M_{ij}=\sum_{\gamma\in\frac{\Gamma}{2}}|v_{ij\gamma}|<\infty
\end{eqnarray}
and the fact that
$p_{1}\alpha_{k+1}\geq p_{1}\alpha_{2}>p\alpha$, we obtain
\begin{eqnarray*}
&&\mid\sum\limits_{i=1}^{m}\sum\limits_{\gamma\prime\in\Gamma(\rho^{\alpha})\atop{h_{\tau}-\gamma\prime\notin
B_{k}(\gamma,p_{1})}}
    v_{ij\gamma\prime}<\Psi_{N},\Phi_{h_{\tau}-\gamma\prime,i}>\mid\\
  &\leq& \sum\limits_{i,i_{1},i_{2},\ldots,i_{p_{1}}=1}^{m}
  \sum\limits_{\gamma\prime,\gamma_{1},\gamma_{2},\ldots,\gamma_{p_{1}}\in\Gamma(\rho^{\alpha})
  \atop{h_{\tau}-\gamma\prime\notin B_{k}(\gamma,p_{1})}} \\
  & &\frac{\mid v_{ij\gamma\prime}\mid\mid v_{i_{1}i\gamma_{1}}\mid\ldots\mid v_{i_{p_{1}}i_{p_{1}-1}\gamma_{p_{1}}}\mid  \mid<\Psi_{N},\Phi_{h_{\tau}-\gamma\prime-\gamma_{1}-\cdots-\gamma_{p_{1}},i_{p_{1}}}>\mid  }
  {\mid\Lambda_{N}-\mid h_{\tau}-\gamma\prime\mid^{2}\mid\mid\Lambda_{N}-\mid h_{\tau}-\gamma\prime-\gamma_{1}\mid^{2}\mid
  \ldots \mid\Lambda_{N}-\mid h_{\tau}-\gamma\prime-\gamma_{1}-\cdots-\gamma_{p_{1}-1}\mid^{2}\mid}
  +O(\rho^{-p\alpha}) \\
    &\leq& (\frac{1}{6}\rho^{\alpha_{k+1}})^{-p_{1}}
  \sum\limits_{\gamma\prime,\gamma_{1},\gamma_{2},\ldots,\gamma_{p_{1}}\in\Gamma(\rho^{\alpha})
  \atop{h_{\tau}-\gamma\prime\notin B_{k}(\gamma,p_{1})}}
  \mid v_{ij\gamma\prime}\mid\mid v_{i_{1}i\gamma_{1}}\mid\ldots\mid v_{i_{p_{1}}i_{p_{1}-1}\gamma_{p_{1}}}\mid \mid<\Psi_{N},\Phi_{h_{\tau}-\gamma\prime-\gamma_{1}-\cdots-\gamma_{p_{1}},i_{p_{1}}}>\mid
  +O(\rho^{-p\alpha}) \\
  &\leq& (\frac{1}{6}\rho^{\alpha_{k+1}})^{-p_{1}} \sum\limits_{i,i_{1},i_{2},\ldots,i_{p_{1}}=1}^{m}
  M_{ij}M_{i_{1}i}\ldots M_{i_{p_{1}i_{p_{1}-1}}}
  \mid<\Psi_{N},\Phi_{h_{\tau}-\gamma\prime-\gamma_{1}-\cdots-\gamma_{p_{1}},i_{p_{1}}}>\mid \\
  &+&O(\rho^{-p\alpha}) \\
  &=& O(\rho^{-p\alpha}).
\end{eqnarray*}
That is, the estimation \eqref{theorem_res1eq2} holds. Therefore,
the decomposition \eqref{theorem_res1eq1} becomes
\begin{equation}\label{theorem_res1eq3}
(\Lambda_{N}-\mid h_{\tau}\mid^{2})<\Psi_{N},\Phi_{h_{\tau},j}>=
\sum\limits_{i=1}^{m}\sum\limits_{\gamma\prime\in\Gamma^{+0}(\rho^{\alpha})\atop{h_{\tau}-\gamma\prime\in
B_{k}(\gamma,p_{1})}}
v_{ij\gamma\prime}<\Psi_{N},\Phi_{h_{\tau}-\gamma\prime,i}>
+O(\rho^{-p\alpha}).
\end{equation}
Since $h_{\tau}-\gamma\prime\in B_{k}(\gamma,p_{1})$, using the
notation $h_{\xi}=h_{\tau}-\gamma\prime$, the decomposition
\eqref{theorem_res1eq3} can be written as
\begin{equation*}
  (\Lambda_{N}-\mid h_{\tau}\mid^{2})<\Psi_{N},\Phi_{h_{\tau},j}> =
  \sum\limits_{i=1}^{m}\sum\limits_{h_{\tau}-h_{\xi}\in\Gamma^{+0}(\rho^{\alpha})}
  v_{ijh_{\tau}-h_{\xi}}<\Psi_{N},\Phi_{h_{\xi},i}>
  +O(\rho^{-p\alpha}).
\end{equation*}
Isolating the terms where $h_{\tau}-h_{\xi}=0$, we get
\begin{eqnarray}\label{theorem_res1eq4}
  \nonumber (\Lambda_{N}-\mid h_{\tau}\mid^{2})<\Psi_{N},\Phi_{h_{\tau},j}> &=&
  \sum\limits_{i=1}^{m}v_{ij0}<\Psi_{N},\Phi_{h_{\tau},i}> \\
  \nonumber &+& \sum\limits_{i=1}^{m}\sum\limits_{h_{\tau}-h_{\xi}\in\Gamma(\rho^{\alpha})}
  v_{ijh_{\tau}-h_{\xi}}<\Psi_{N},\Phi_{h_{\xi},i}> \\
  &+&O(\rho^{-p\alpha}).
\end{eqnarray}
Considering the decomposition \eqref{theorem_res1eq4} for an
arbitrary $h_{\tau}\in B_{k}(\gamma,p_{1})$, $\tau=1,2,\ldots,b_{k}$
and for all $j=1,2,\ldots,m$, we get
\begin{equation}\label{theorem_res1eq5}
    (\Lambda_{N}-\mid h_{\tau}\mid^{2})I A(N,h_{\tau})=V_{0}A(N,h_{\tau})+
    \sum\limits_{\xi=1\atop{\xi\neq\tau}}^{b_k}V_{h_{\tau}-h_{\xi}}A(N,h_{\xi})+O(\rho^{-p\alpha}),
\end{equation}
or
\begin{equation}\label{theorem_res1eq6}
    [(\Lambda_{N}-\mid h_{\tau}\mid^{2})I -V_{0}] A(N,h_{\tau})=
    \sum\limits_{\xi=1\atop{\xi\neq\tau}}^{b_k}V_{h_{\tau}-h_{\xi}}A(N,h_{\xi})+O(\rho^{-p\alpha}),
\end{equation}
where $I$ is an $m\times m$ identity matrix, $V_{h_{\tau}-h_{\xi}}$
is given by \eqref{matrixdecompold}, $O(\rho^{-p\alpha})$ is an
$m\times1$ vector and $A(N,h_{\xi})$ is the $m\times1$ vector
\begin{equation}\label{matrixinnerbk}
A(N,h_{\xi})=(<\Psi_{N},\Phi_{h_{\xi},1}>,
<\Psi_{N},\Phi_{h_{\xi},2}>,\ldots,<\Psi_{N},\Phi_{h_{\xi},m}>)
\end{equation}
for any $\xi=1,2,\ldots,b_{k}$. \\
Let $\lambda_{i}$ be an eigenvalue of the matrix $V_{0}$ and
$\omega_{i}$ the corresponding normalized eigenvector. Multiplying
both sides of the decomposition \eqref{theorem_res1eq6} by
$\omega_{i}$, we get
\begin{equation}\label{theorem_res1eq7}
    [(\Lambda_{N}-\mid h_{\tau}\mid^{2})I -V_{0}] A(N,h_{\tau})\cdot\omega_{i}=
    \sum\limits_{\xi=1\atop{\xi\neq\tau}}^{b_k}V_{h_{\tau}-h_{\xi}}A(N,h_{\xi})\cdot\omega_{i}+O(\rho^{-p\alpha}).
\end{equation}
For the left hand side of this last equality we have
\begin{eqnarray}\label{theorem_res1eq8}
  \nonumber[(\Lambda_{N}-\mid h_{\tau}\mid^{2})I -V_{0}] A(N,h_{\tau})\cdot\omega_{i}
  &=&
  \nonumber A(N,h_{\tau})\cdot[(\Lambda_{N}-\mid h_{\tau}\mid^{2})I -V_{0}]\omega_{i} \\
  &=&
  \nonumber A(N,h_{\tau})\cdot(\Lambda_{N}-\mid h_{\tau}\mid^{2}-\lambda_{i})\omega_{i} \\
  &=&
  (\Lambda_{N}-\mid h_{\tau}\mid^{2}-\lambda_{i})A(N,h_{\tau})\cdot\omega_{i}.
\end{eqnarray}
Letting $\lambda_{N,\tau,i}=\Lambda_{N}-\mid
h_{\tau}\mid^{2}-\lambda_{i}$, by the equation
\eqref{theorem_res1eq8}, we have from the decomposition
\eqref{theorem_res1eq7} that
\begin{equation}\label{theorem_res1eq9}
    [\lambda_{N,\tau,i}IA(N,h_{\tau})-
    \sum\limits_{\xi=1\atop{\xi\neq\tau}}^{b_k}V_{h_{\tau}-h_{\xi}}A(N,h_{\xi})]\cdot\omega_{i}
    =O(\rho^{-p\alpha}).
\end{equation}
Since the set of normalized eigenvectors
$\{\omega_{i}\}_{i=1,2,\ldots,m}$ of the matrix $V_0$ forms a basis
for $R^{m}$, for any vector
$\lambda_{N,\tau,i}IA(N,h_{\tau})-\sum\limits_{\xi=1\atop{\xi\neq\tau}}^{b_k}V_{h_{\tau}-h_{\xi}}A(N,h_{\xi})$,
$\tau=1,2,\ldots,b_k$ in $R^{m}$ by using Parseval's relation and
the equation \eqref{theorem_res1eq9}, we have
\begin{flushleft}
    $\mid
    \lambda_{N,\tau,i}IA(N,h_{\tau})-\sum\limits_{\xi=1\atop{\xi\neq\tau}}^{b_k}V_{h_{\tau}-h_{\xi}}A(N,h_{\xi})
    \mid^{2}$
\begin{equation}\label{theorem_res1eq14}
    =\sum\limits_{i=1}^{m}
    \mid
    [\lambda_{N,\tau,i}IA(N,h_{\tau})-
    \sum\limits_{\xi=1\atop{\xi\neq\tau}}^{b_k}V_{h_{\tau}-h_{\xi}}A(N,h_{\xi})]\cdot\omega_{i}
    \mid^{2}
    =\sum\limits_{i=1}^{m}\mid O(\rho^{-p\alpha})\mid^{2}.
\end{equation}
\end{flushleft}
It follows from \eqref{theorem_res1eq14} that
\begin{equation}\label{theorem_res1eq10}
    \lambda_{N,\tau,i}IA(N,h_{\tau})-
    \sum\limits_{\xi=1\atop{\xi\neq\tau}}^{b_k}V_{h_{\tau}-h_{\xi}}A(N,h_{\xi})
    =O(\rho^{-p\alpha}).
\end{equation}
Now, considering the equation \eqref{theorem_res1eq10} for all
$h_{\tau}\in B_{k}(\gamma,p_{1})$, $\tau=1,2,\ldots,b_{k}$, we
obtain the system
\begin{equation}\label{theorem_res1eq11}
\left[
  \begin{array}{cccc}
    \lambda_{N,1,i}I & -V_{h_{1}-h_{2}} & \cdots & -V_{h_{1}-h_{b_{k}}}\\
    -V_{h_{2}-h_{1}} & \lambda_{N,2,i}I & \cdots& -V_{h_{2}-h_{b_{k}}} \\
    \vdots &  &  &  \\
    -V_{h_{b_{k}}-h_{1}} & -V_{h_{b_{k}}-h_{2}} & \cdots & \lambda_{N,b_{k},i}I\\
  \end{array}
\right] \left[
\begin{array}{c}
  A(N,h_{1}) \\
  A(N,h_{2}) \\
  \vdots \\
  A(N,h_{b_{k}})
\end{array}
\right] = \left[
\begin{array}{c}
  O(\rho^{-p\alpha}) \\
  O(\rho^{-p\alpha})\\
  \vdots \\
  O(\rho^{-p\alpha})
\end{array}
\right].
\end{equation}
We may write the system \eqref{theorem_res1eq11} as
\begin{equation}\label{theorem_res1eq12}
    [(\Lambda_{N}-\lambda_{i})I-C]A(N,h_{1},h_{2},\ldots,h_{b_{k}})=O(\rho^{-p\alpha}),
\end{equation}
where $I$ is an $mb_k\times mb_k$ identity matrix, $C$ is given by
\eqref{appmat}, $A(N,h_{1},h_{2},\ldots,h_{b_{k}})$ is the
$mb_{k}\times1$ vector
\begin{equation*}
    A(N,h_{1},h_{2},\ldots,h_{b_{k}})=(A(N,h_{1}),A(N,h_{2}),\ldots,A(N,h_{b_{k}}))
\end{equation*}
and $O(\rho^{-p\alpha})$ is an $mb_{k}\times1$ vector. Multiplying
both sides of the equation \eqref{theorem_res1eq12} by
$[(\Lambda_{N}-\lambda_{i})I-C]^{-1}$, and taking norm of both
sides, we get
\begin{equation}\label{theorem_res1eq13}
    \mid A(N,h_{1},h_{2},\ldots,h_{b_{k}})\mid
    \leq
    \parallel[(\Lambda_{N}-\lambda_{i})I-C]^{-1}\parallel\mid O(\rho^{-p\alpha})\mid.
\end{equation}
By the estimation \eqref{theorem_res1eq111}, together with
$b_{k}=O(\rho^{\frac{d}{2}3^{d}\alpha})$ we have the estimations
\begin{center}
    $\mid A(N,h_{1},h_{2},\ldots,h_{b_{k}})\mid>c_{18}\rho^{-c\alpha},\quad$
    $\mid O(\rho^{-p\alpha})\mid=O(\rho^{-(p-\frac{d}{4}3^{d})\alpha}).$
\end{center}
Thus it follows from the inequality \eqref{theorem_res1eq13} and the
last estimations that
\begin{equation*}
    c_{18}\rho^{-c\alpha}\leq
    \parallel[(\Lambda_{N}-\lambda_{i})I-C]^{-1}\parallel
    c_{19}\rho^{-(p-\frac{d}{4}3^{d})\alpha},
\end{equation*}
\begin{equation*}
    \min_{s=1,2,\ldots,mb_k}\mid\Lambda_{N}-\lambda_{i}-\eta_{s}(\gamma)\mid\leq c_{20}\rho^{-(p-c-\frac{d}{4}3^{d})\alpha},
\end{equation*}
\begin{equation*}
    \Lambda_{N}=\lambda_{i}+\eta_{s}(\gamma)+O(\rho^{-(p-c-\frac{d}{4}3^{d})\alpha}).
\end{equation*}
\end{proof}

\begin{theorem}\label{theorem_res2}
Let $\mid\gamma\mid^{2}$ be a resonance eigenvalue of the operator
$L(0)$, that is,
$\gamma\in(\bigcap\limits_{i=1}^{k}V_{\gamma_{i}}(\rho^{\alpha_{k}}))\setminus
E_{k+1}$, $k=1,2,\ldots,d-1$ where $\mid\gamma\mid\sim\rho$,
$\lambda_{i}$ an eigenvalue of the matrix $V_{0}$,
$\eta_{s}(\gamma)$ an eigenvalue of the matrix $C$ such that
$\mid\eta_{s}(\gamma)-\mid\gamma\mid^{2}\mid<\frac{3}{8}\rho^{\alpha_{1}}$.
Then there is an eigenvalue $\Lambda_{N}$ of the operator $L(V)$
satisfying
\begin{equation}\label{theorem_res2eq1}
    \Lambda_{N}=\lambda_{i}+\eta_{s}(\gamma)+O(\rho^{-p\alpha+\frac{d}{4}3^{d}\alpha+\frac{d-1}{2}}),
\end{equation}
where $\eta_{s}(\gamma)$, $s=1,2,\ldots,mb_{k}$ is an eigenvalue of
the matrix $C$ which is given by \eqref{appmat}.
\end{theorem}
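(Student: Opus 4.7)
My plan is a contradiction argument via an approximate eigenfunction, mirroring in reverse the analysis of Theorem \ref{theorem_res1}. Suppose no eigenvalue of $L(V)$ lies in the interval $(\lambda_i + \eta_s(\gamma) - r, \lambda_i + \eta_s(\gamma) + r)$ with $r = \rho^{-p\alpha + \frac{d}{4}3^d\alpha + \frac{d-1}{2}}$. By self-adjointness of $L(V)$, $\|(L(V) - (\lambda_i + \eta_s(\gamma)) I)^{-1}\| \leq 1/r$, so it suffices to exhibit a trial $F \in L_2^m(F)$ with $\|(L(V) - \lambda_i - \eta_s(\gamma)) F\| < r \|F\|$; this yields a contradiction and forces the existence of the asserted $\Lambda_N$.

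For the trial function take a unit eigenvector $u = (u_1, \ldots, u_{b_k}) \in \mathbb{R}^{mb_k}$ of $C$ with $Cu = \eta_s(\gamma) u$ and $u_\tau \in \mathbb{R}^m$, and the normalized eigenvector $\omega_i$ of $V_0$ for $\lambda_i$; set
$$F(x) = \sum_{\tau=1}^{b_k} u_\tau \, u_{h_\tau}(x).$$
Computing $L(V) F = -\Delta F + V F$ via the Fourier expansion \eqref{decomp} of the entries of $V(x)$, reorganizing the sum by the output frequency $h_\xi$, and applying $(Cu)_\xi = \eta_s(\gamma) u_\xi$ to collapse the portion with $h_\xi \in B_k(\gamma, p_1)$, produces
$$L(V) F = V_0 F + \eta_s(\gamma) F + E(x),$$
where $E$ gathers (i) terms whose output frequency falls outside $B_k(\gamma, p_1)$, and (ii) the tail $O(\rho^{-p\alpha})$ from \eqref{decomp}. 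Piece (i) is controlled exactly as between \eqref{binding} and \eqref{theorem_res1eq2}: iterating the binding formula $p_1$ times and invoking Lemma \ref{lemres_res1} supplies small-denominator factors $\geq \tfrac{1}{6}\rho^{\alpha_{k+1}}$ at each step, and $p_1 \alpha_{k+1} > p\alpha$ yields $\|E\| = O(\rho^{-p\alpha + \frac{d}{4}3^d \alpha})$, with the extra $\rho^{\frac{d}{4}3^d\alpha}$ factor arising from $b_k = O(\rho^{\frac{d}{2}3^d \alpha})$ as in \eqref{theorem_res1eq13}.

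Subtracting $\lambda_i F$ from both sides yields $(L(V) - (\lambda_i + \eta_s(\gamma))) F = (V_0 - \lambda_i I) F + E$. Decomposing $F(x)$ pointwise in the basis $\{\omega_j\}_{j=1}^m$ of $\mathbb{R}^m$, the $\omega_i$-component of the right-hand side equals the $\omega_i$-component of $E$ and is of order $\rho^{-p\alpha + \frac{d}{4}3^d\alpha}$. The components along $\omega_j$ for $j \neq i$ contain the extra term $(\lambda_j - \lambda_i)(F \cdot \omega_j)$, which does not vanish a priori and constitutes the main technical obstacle of the proof; it is absorbed by repeating the identity above with $\lambda_j$ in place of $\lambda_i$ and invoking the resolvent estimate \eqref{theorem_res1eq13}, together with a lower bound $\|F\| \geq c \rho^{-(d-1)/2}$ derived from the hypothesis $\lvert \eta_s(\gamma) - \lvert \gamma \rvert^2 \rvert < \tfrac{3}{8}\rho^{\alpha_1}$ via a counting argument matching the eigenvalues of $C$ against those of $L(V)$ in the window $\lvert \Lambda - \lvert\gamma\rvert^2 \rvert < \tfrac{1}{2}\rho^{\alpha_1}$ (the polynomial factor $\rho^{(d-1)/2}$ reflects summation over the $(d-1)$-dimensional resonance shell). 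Once these ingredients are in place, combining $\|(L(V) - \lambda_i - \eta_s(\gamma)) F\| \leq c\rho^{-p\alpha + \frac{d}{4}3^d\alpha}$ with $\|F\| \geq c\rho^{-(d-1)/2}$ contradicts $\|(L(V) - (\lambda_i + \eta_s(\gamma)) I)^{-1}\| \leq 1/r$ and yields the asymptotic \eqref{theorem_res2eq1}.
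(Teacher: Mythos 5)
Your strategy (a quasimode plus the self-adjoint distance-to-spectrum principle) is genuinely different from the paper's, but as written it has two fatal gaps, both at the points you yourself flag as delicate. First, the term $(V_0-\lambda_i)F$ is not an error term at all: the eigenvector $u$ of $C$ has no relation to the eigenvector $\omega_i$ of $V_0$, since $C$ does not commute with the block-diagonal matrix $\mathrm{diag}(V_0,\ldots,V_0)$, so generically $\|(V_0-\lambda_i)F\|$ is of the same order as $\|F\|$ — enormously larger than the required $r=\rho^{-p\alpha+\frac{d}{4}3^{d}\alpha+\frac{d-1}{2}}$. Your proposed fix ("repeat the identity with $\lambda_j$ and invoke \eqref{theorem_res1eq13}") is not an argument: \eqref{theorem_res1eq13} is an estimate on the coefficient vector $A(N,h_1,\ldots,h_{b_k})$ of a \emph{true} eigenfunction $\Psi_N$, it says nothing about the components of your trial function along $\omega_j$, and writing the same identity with $\lambda_j$ only reproduces the same problem with the roles of $i$ and $j$ exchanged. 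Second, the leakage estimate $\|E\|=O(\rho^{-p\alpha+\frac{d}{4}3^{d}\alpha})$ cannot be obtained "exactly as between \eqref{binding} and \eqref{theorem_res1eq2}": that iteration uses the binding formula $(\Lambda_N-|h_\tau-\gamma'-\cdots|^2)\langle\Psi_N,\Phi\rangle=\langle\Psi_N,V\Phi\rangle$ for a genuine eigenpair, with the small denominators supplied by Lemma \ref{lemres_res1}; for a trial function there is no $\Lambda_N$ and no binding formula, and the Fourier mass of $VF$ pushed outside $B_k(\gamma,p_1)$ is a priori only $O(1)$ (it is controlled by $M$ times the size of the coefficients $u_\tau$ near the boundary of the cluster, and smallness of those coefficients would itself require a concentration argument using the hypothesis $|\eta_s-|\gamma|^2|<\frac38\rho^{\alpha_1}$ and a splitting like $C=A+B$ with \eqref{theorem_res2eq8}, which you do not carry out). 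Finally, the asserted lower bound $\|F\|\geq c\rho^{-(d-1)/2}$ is spurious: your $F$ is built from orthogonal exponentials with a unit coefficient vector, so $\|F\|\asymp 1$; the exponent $\frac{d-1}{2}$ has a different origin entirely.

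For comparison, the paper never builds a trial function. It takes, by general perturbation theory, an eigenvalue $\Lambda_N$ with $|\Lambda_N-|\gamma|^2|<\frac12\rho^{2\alpha_1}$, reuses the system \eqref{theorem_res1eq12} (in which $V_0$ has already been removed by pairing with $\omega_i$ in Theorem \ref{theorem_res1}, which is exactly how the $(V_0-\lambda_i)$ difficulty is avoided), pairs it with the eigenvector $\theta_s$ of $C$ to get $|\Lambda_N-\lambda_i-\eta_s|\,|A(N,\cdot)\cdot\theta_s|=O(\rho^{-p\alpha+\frac{d}{4}3^{d}\alpha})$, and then spends the bulk of the proof proving the lower bound \eqref{theorem_res2eq17}: by Parseval over the complete system $\{\Psi_N\}$, the splitting $C=A+B$, the binding formula \eqref{theorem_res2eq8}, and a geometric-series expansion of $(\Lambda_N-|h_\tau|^2)^{-1}$, the mass of $\sum_{\tau,i}\theta_s^{\tau i}\Phi_{h_\tau,i}$ carried by eigenfunctions with $|\Lambda_N-|\gamma|^2|\geq\frac12\rho^{2\alpha_1}$ is $O(\rho^{-2\alpha_1})$; since at most $\rho^{d-1}$ eigenvalues lie in the window, some $N$ there has $|A(N,\cdot)\cdot\theta_s|^2\geq(1-O(\rho^{-2\alpha_1}))\rho^{-(d-1)}$ — this counting step is the true source of the factor $\rho^{\frac{d-1}{2}}$ in \eqref{theorem_res2eq1}. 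Your proposal omits precisely these two mechanisms (elimination of $V_0$ via $\omega_i$ before $C$ enters, and the Parseval-plus-counting lower bound), so it does not establish the theorem.
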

\begin{proof}
We prove this theorem by using the same consideration as in Karak{\i}l{\i}\d{c} (2004). 
By the general perturbation theory, there is an eigenvalue
$\Lambda_{N}$ of the operator $L(V)$ such that
$\mid\Lambda_{N}-\mid\gamma\mid^{2}\mid<\frac{1}{2}\rho^{2\alpha_{1}}$ holds.
Thus one can use the system \eqref{theorem_res1eq12}
\begin{equation}\label{theorem_res2eq2}
    [(\Lambda_{N}-\lambda_{i})I-C]A(N,h_{1},h_{2},\ldots,h_{b_{k}})=O(\rho^{-p\alpha})
\end{equation}
of Theorem \ref{theorem_res1}. Let $\eta_{s}$, $s=1,2,\ldots,mb_{k}$
be an eigenvalue of the matrix $C$ and
$\theta_{s}=(\theta_{s}^{1},\theta_{s}^{2},\ldots,\theta_{s}^{b_k})_{mb_k\times1}$
the corresponding normalized eigenvector, $\mid \theta_{s}\mid=1$, where
$\theta_{s}^{\tau}=(\theta_{s}^{\tau1},\theta_{s}^{\tau2},\ldots,\theta_{s}^{\tau
m})_{m\times1}$, $\tau=1,2,\ldots,b_k$. Multiplying the
equation \eqref{theorem_res2eq2} by $\theta_{s}$, we get
\begin{equation}\label{theorem_res2eq4}
    [(\Lambda_{N}-\lambda_{i})I-C]A(N,h_{1},h_{2},\ldots,h_{b_{k}})\cdot \theta_{s}=O(\rho^{-p\alpha})\cdot \theta_{s}.
\end{equation}
From the left hand side of the equation \eqref{theorem_res2eq4} we
get
\begin{flushleft}
        $[(\Lambda_{N}-\lambda_{i})I-C]A(N,h_{1},h_{2},\ldots,h_{b_{k}})\cdot \theta_{s}$
\begin{eqnarray}\label{theorem_res2eq18}
  \nonumber &=&
   A(N,h_{1},h_{2},\ldots,h_{b_{k}})\cdot[(\Lambda_{N}-\lambda_{i})I-C]\theta_{s}\\
  \nonumber &=&
   A(N,h_{1},h_{2},\ldots,h_{b_{k}})\cdot[(\Lambda_{N}-\lambda_{i})I\theta_{s}-\eta_{s}\theta_{s}]\\
  \nonumber &=&
   A(N,h_{1},h_{2},\ldots,h_{b_{k}})\cdot(\Lambda_{N}-\lambda_{i}-\eta_{s})\theta_{s} \\
   &=&
   (\Lambda_{N}-\lambda_{i}-\eta_{s})A(N,h_{1},h_{2},\ldots,h_{b_{k}})\cdot \theta_{s}.
\end{eqnarray}
\end{flushleft}
Using the equation \eqref{theorem_res2eq18} in the equation
\eqref{theorem_res2eq4}, and taking norm of both sides, we get
\begin{equation}\label{theorem_res2eq5}
    \mid\Lambda_{N}-\lambda_{i}-\eta_{s}\mid\mid A(N,h_{1},h_{2},\ldots,h_{b_{k}})\cdot \theta_{s}\mid=
    \mid O(\rho^{-p\alpha})\cdot \theta_{s}\mid.
\end{equation}
From the right hand side of the equation \eqref{theorem_res2eq5} by
using $b_{k}=O(\rho^{\frac{d}{2}3^{d}\alpha})$, we have
\begin{equation}\label{theorem_res2eq6}
    \mid O(\rho^{-p\alpha})\cdot \theta_{s}\mid \leq
    \mid O(\rho^{-p\alpha})\mid\mid \theta_{s}\mid =
    \sqrt{mb_{k}(\rho^{-p\alpha})^{2}}=\sqrt{mb_{k}}\rho^{-p\alpha}=O(\rho^{-p\alpha+\frac{d}{4}3^{d}\alpha}).
\end{equation}
The equation \eqref{theorem_res2eq5} and the estimation
\eqref{theorem_res2eq6} give
\begin{equation}\label{theorem_res2eq10}
    \mid\Lambda_{N}-\lambda_{i}-\eta_{s}\mid\mid A(N,h_{1},h_{2},\ldots,h_{b_{k}})\cdot \theta_{s}\mid=
    O(\rho^{-p\alpha+\frac{d}{4}3^{d}\alpha}).
\end{equation}
Now, we estimate $\mid A(N,h_{1},h_{2},\ldots,h_{b_{k}})\cdot
\theta_{s}\mid$. Since
\begin{equation}\label{theorem_res2eq11}
    \mid A(N,h_{1},h_{2},\ldots,h_{b_{k}})\cdot \theta_{s}\mid
    =\mid\sum\limits_{\tau=1}^{b_{k}}\sum\limits_{i=1}^{m}\theta_{s}^{\tau i}<\Psi_{N},\Phi_{h_{\tau},i}>\mid
    =\mid<\Psi_{N},\sum\limits_{\tau=1}^{b_{k}}\sum\limits_{i=1}^{m}\theta_{s}^{\tau i}\Phi_{h_{\tau},i}>\mid,
\end{equation}
to estimate $\mid A(N,h_{1},h_{2},\ldots,h_{b_{k}})\cdot
\theta_{s}\mid$, we consider the Parseval's relation
\begin{eqnarray}\label{theorem_res2eq12}
\nonumber 1
   &=&
   \parallel\sum\limits_{\tau=1}^{b_{k}}\sum\limits_{i=1}^{m}\theta_{s}^{\tau i}\Phi_{h_{\tau},i}\parallel^{2}
   = \sum\limits_{N=1}^{\infty}
   \mid<\Psi_{N},
   \sum\limits_{\tau=1}^{b_{k}}\sum\limits_{i=1}^{m}\theta_{s}^{\tau i}\Phi_{h_{\tau},i}>\mid^{2} \\
\nonumber &=&
   \sum\limits_{N=1}^{\infty}
   \mid\sum\limits_{\tau=1}^{b_{k}}\sum\limits_{i=1}^{m}\theta_{s}^{\tau i}<\Psi_{N},\Phi_{h_{\tau},i}>\mid^{2} \\
\nonumber &=&
   \sum\limits_{N:\mid\Lambda_{N}-\mid\gamma\mid^{2}\mid\geq\frac{1}{2}\rho^{2\alpha_{1}}}
   \mid\sum\limits_{\tau=1}^{b_{k}}\sum\limits_{i=1}^{m}\theta_{s}^{\tau i}<\Psi_{N},\Phi_{h_{\tau},i}>\mid^{2} \\
   &+&
   \sum\limits_{N:\mid\Lambda_{N}-\mid\gamma\mid^{2}\mid<\frac{1}{2}\rho^{2\alpha_{1}}}
   \mid\sum\limits_{\tau=1}^{b_{k}}\sum\limits_{i=1}^{m}\theta_{s}^{\tau i}<\Psi_{N},\Phi_{h_{\tau},i}>\mid^{2}.
\end{eqnarray}
We give an estimation for the first summation in the last
expression.
\begin{flushleft}
    $
    \sum\limits_{N:\mid\Lambda_{N}-\mid\gamma\mid^{2}\mid\geq\frac{1}{2}\rho^{2\alpha_{1}}}
    \mid\sum\limits_{\tau=1}^{b_{k}}\sum\limits_{i=1}^{m}\theta_{s}^{\tau i}<\Psi_{N},\Phi_{h_{\tau},i}>\mid^{2}
    $
\begin{eqnarray}\label{theorem_res2eq13}
   \nonumber&=&
   \sum\limits_{N:\mid\Lambda_{N}-\mid\gamma\mid^{2}\mid\geq\frac{1}{2}\rho^{2\alpha_{1}}}
   \mid\sum\limits_{\tau:\mid\eta_{s}-\mid h_{\tau}\mid^{2}\mid<\frac{1}{8}\rho^{\alpha_{1}}}
   \sum\limits_{i=1}^{m}\theta_{s}^{\tau i}<\Psi_{N},\Phi_{h_{\tau},i}> \\
   \nonumber&+&
   \sum\limits_{\tau:\mid\eta_{s}-\mid h_{\tau}\mid^{2}\mid\geq\frac{1}{8}\rho^{\alpha_{1}}}
   \sum\limits_{i=1}^{m}\theta_{s}^{\tau i}<\Psi_{N},\Phi_{h_{\tau},i}>\mid^{2}  \\
   \nonumber&<&
   2\sum\limits_{N:\mid\Lambda_{N}-\mid\gamma\mid^{2}\mid\geq\frac{1}{2}\rho^{2\alpha_{1}}}
   \mid\sum\limits_{\tau:\mid\eta_{s}-\mid h_{\tau}\mid^{2}\mid<\frac{1}{8}\rho^{\alpha_{1}}}
   \sum\limits_{i=1}^{m}\theta_{s}^{\tau i}<\Psi_{N},\Phi_{h_{\tau},i}>\mid^{2}\\
   &+&
   2\sum\limits_{N:\mid\Lambda_{N}-\mid\gamma\mid^{2}\mid\geq\frac{1}{2}\rho^{2\alpha_{1}}}
   \mid\sum\limits_{\tau:\mid\eta_{s}-\mid h_{\tau}\mid^{2}\mid\geq\frac{1}{8}\rho^{\alpha_{1}}}
   \sum\limits_{i=1}^{m}\theta_{s}^{\tau i}<\Psi_{N},\Phi_{h_{\tau},i}>\mid^{2}.
\end{eqnarray}
\end{flushleft}
To estimate the term 
$$2\sum\limits_{N:\mid\Lambda_{N}-\mid\gamma\mid^{2}\mid\geq\frac{1}{2}\rho^{2\alpha_{1}}}
   \mid\sum\limits_{\tau:\mid\eta_{s}-\mid h_{\tau}\mid^{2}\mid\geq\frac{1}{8}\rho^{\alpha_{1}}}
   \sum\limits_{i=1}^{m}\theta_{s}^{\tau i}<\Psi_{N},\Phi_{h_{\tau},i}>\mid^{2}$$
in the inequality \eqref{theorem_res2eq13}, we consider the matrix
$C$ as $C=A+B$ where
\begin{equation}\label{theorem_res2eq7}
   A= \left[
      \begin{array}{ccc}
        \mid h_{1}\mid^{2}I &  & 0 \\
         & \ddots &  \\
        0 &  & \mid h_{b_{k}}\mid^{2}I \\
      \end{array}
    \right],
    \quad
    B =\left[
      \begin{array}{cccc}
        0 & V_{h_{1}-h_{2}} & \cdots & V_{h_{1}-h_{b_{k}}} \\
        V_{h_{2}-h_{1}} & 0 & \cdots & V_{h_{2}-h_{b_{k}}} \\
        \vdots &  & \ddots & \vdots \\
        V_{h_{b_{k}}-h_{1}} & V_{h_{b_{k}}-h_{2}} & \cdots & 0 \\
      \end{array}
    \right].
\end{equation}
Let $\{e_{\tau, i}\}_{\tau=1,2,\ldots,b_k, i=1,2,\ldots,m}$ be a set
of orthonormal vectors such that $e_{\tau, i}\cdot e_{\xi, k}=1$ if
$\tau=\xi$, $i=k$, $e_{\tau, i}\cdot e_{\xi, k}=0$ otherwise.
Multiplying $C\theta_{s}=(A+B)\theta_{s}$ by $e_{\tau, i}$, we get
\begin{equation*}
    C\theta_{s}\cdot e_{\tau, i}=(\eta_{s}\theta_{s})\cdot e_{\tau, i}=\eta_{s}(\theta_{s}\cdot e_{\tau, i})=
    \eta_{s}\theta_{s}^{\tau i},
\end{equation*}
and
\begin{equation*}
    (A+B)\theta_{s}\cdot e_{\tau, i}=\theta_{s}\cdot(A+B)e_{\tau, i}=
    \theta_{s}\cdot Ae_{\tau, i}+\theta_{s}\cdot Be_{\tau, i}=
    \theta_{s}^{\tau i}\mid h_{\tau}\mid^{2}+\theta_{s}\cdot Be_{\tau, i}.
\end{equation*}
From the equality of the last two equations we have
\begin{equation}\label{theorem_res2eq8}
    (\eta_{s}-\mid h_{\tau}\mid^{2})\theta_{s}^{\tau i}=\theta_{s}\cdot Be_{\tau, i}
\end{equation}
for any $\tau=1,2,\ldots,b_{k}$, $i=1,2,\ldots,m$.\\
Using Bessel's inequality, Parseval's relation, orthogonality of the
functions $\Phi_{h_{\tau},i}(x)$, $\tau=1,2,\ldots,b_{k}$,
$i=1,2,\ldots,m$, the binding formula \eqref{theorem_res2eq8} and
$\parallel B\parallel\leq M$, we have
\begin{flushleft}
   $
   2\sum\limits_{N:\mid\Lambda_{N}-\mid\gamma\mid^{2}\mid\geq\frac{1}{2}\rho^{2\alpha_{1}}}
   \mid\sum\limits_{\tau:\mid\eta_{s}-\mid h_{\tau}\mid^{2}\mid\geq\frac{1}{8}\rho^{\alpha_{1}}}
   \sum\limits_{i=1}^{m}\theta_{s}^{\tau i}<\Psi_{N},\Phi_{h_{\tau},i}>\mid^{2}
   $
   \begin{eqnarray}\label{theorem_res2eq14}
   \nonumber
   &=&
   2\sum\limits_{N:\mid\Lambda_{N}-\mid\gamma\mid^{2}\mid\geq\frac{1}{2}\rho^{2\alpha_{1}}}
   \mid<\Psi_{N},\sum\limits_{\tau:\mid\eta_{s}-\mid h_{\tau}\mid^{2}\mid\geq\frac{1}{8}\rho^{\alpha_{1}}}
   \sum\limits_{i=1}^{m}
   \theta_{s}^{\tau i}\Phi_{h_{\tau},i}>\mid^{2}  \\
   \nonumber
   &\leq&
   2\sum\limits_{N=1}^{\infty}
   \mid<\Psi_{N},\sum\limits_{\tau:\mid\eta_{s}-\mid h_{\tau}\mid^{2}\mid\geq\frac{1}{8}\rho^{\alpha_{1}}}
   \sum\limits_{i=1}^{m}
   \theta_{s}^{\tau i}\Phi_{h_{\tau},i}>\mid^{2} \\
   \nonumber
   &=&
   2\parallel\sum\limits_{\tau:\mid\eta_{s}-\mid h_{\tau}\mid^{2}\mid\geq\frac{1}{8}\rho^{\alpha_{1}}}
   \sum\limits_{i=1}^{m}
   \theta_{s}^{\tau i}\Phi_{h_{\tau},i}\parallel^{2} \\
   \nonumber
   &=&
   2\sum\limits_{\tau:\mid\eta_{s}-\mid h_{\tau}\mid^{2}\mid\geq\frac{1}{8}\rho^{\alpha_{1}}}
   \sum\limits_{i=1}^{m}
   \mid \theta_{s}^{\tau i}\mid^{2}\parallel\Phi_{h_{\tau},i}\parallel^{2}
   \\ \nonumber
   &=&
   2\sum\limits_{\tau:\mid\eta_{s}-\mid h_{\tau}\mid^{2}\mid\geq\frac{1}{8}\rho^{\alpha_{1}}}
   \sum\limits_{i=1}^{m}
   \mid \theta_{s}^{\tau i}\mid^{2} \\
   \nonumber
   &=&
   2\sum\limits_{\tau:\mid\eta_{s}-\mid h_{\tau}\mid^{2}\mid\geq\frac{1}{8}\rho^{\alpha_{1}}}
   \sum\limits_{i=1}^{m}
   \frac{\mid \theta_{s}\cdot Be_{\tau, i}\mid^{2}}{\mid \eta_{s}-\mid h_{\tau}\mid^{2}\mid^{2}} \\
   \nonumber
   &\leq&
   2(\frac{1}{8}\rho^{\alpha_{1}})^{-2}
   \sum\limits_{\tau:\mid\eta_{s}-\mid h_{\tau}\mid^{2}\mid\geq\frac{1}{8}\rho^{\alpha_{1}}}
   \sum\limits_{i=1}^{m}
   \mid \theta_{s}\mid^{2}\parallel B\parallel^{2}\mid e_{\tau, i}\mid^{2} \\
   &=&
   O(\rho^{-2\alpha_{1}}).
\end{eqnarray}
 \end{flushleft}
Now, we estimate the term
$$2\sum\limits_{N:\mid\Lambda_{N}-\mid\gamma\mid^{2}\mid\geq\frac{1}{2}\rho^{2\alpha_{1}}}
   \mid\sum\limits_{\tau:\mid\eta_{s}-\mid h_{\tau}\mid^{2}\mid<\frac{1}{8}\rho^{\alpha_{1}}}
   \sum\limits_{i=1}^{m}\theta_{s}^{\tau i}<\Psi_{N},\Phi_{h_{\tau},i}>\mid^{2}$$
in the inequality \eqref{theorem_res2eq13}. The assumption
$$\mid\eta_{s}-\mid\gamma\mid^{2}\mid<\frac{3}{8}\rho^{\alpha_{1}}$$
of the theorem together with
$\mid\Lambda_{N}-\mid\gamma\mid^{2}\mid\geq\frac{1}{2}\rho^{2\alpha_{1}}$
and $\mid\eta_{s}-\mid
h_{\tau}\mid^{2}\mid<\frac{1}{8}\rho^{\alpha_{1}}$ imply that
$\mid\Lambda_{N}-\mid
h_{\tau}\mid^{2}\mid>\frac{1}{2}\rho^{\alpha_{1}}$ and
$\mid\mid\gamma\mid^{2}-\mid
h_{\tau}\mid^{2}\mid<\frac{1}{2}\rho^{\alpha_{1}}$. So one has
\begin{eqnarray}\label{theorem_res2eq9}
  \nonumber\frac{1}{\Lambda_{N}-\mid h_{\tau}\mid^{2}}
  &=&
  \frac{1}{\Lambda_{N}-\mid\gamma\mid^{2}}
  \sum\limits_{n=0}^{\infty}
  (\frac{\mid h_{\tau}\mid^{2}-\mid\gamma\mid^{2}}{\Lambda_{N}-\mid\gamma\mid^{2}})^{n}\\
  &=&
  \frac{1}{\Lambda_{N}-\mid\gamma\mid^{2}}
  \{\sum\limits_{n=0}^{k}
  (\frac{\mid h_{\tau}\mid^{2}-\mid\gamma\mid^{2}}{\Lambda_{N}-\mid\gamma\mid^{2}})^{n}+O(\rho^{-(k+1)\alpha_{1}})\}.
\end{eqnarray}
Using the binding formula \eqref{binding} for any $h_{\tau}\in
B_{k}(\gamma,p_{1})$, $\mid\Lambda_{N}-\mid
h_{\tau}\mid^{2}\mid>\frac{1}{2}\rho^{\alpha_{1}}$ and the
decomposition \eqref{theorem_res2eq9}, we have
\begin{eqnarray*}
 & & 2\sum\limits_{N:\mid\Lambda_{N}-\mid\gamma\mid^{2}\mid\geq\frac{1}{2}\rho^{2\alpha_{1}}}
   \mid\sum\limits_0{\tau:\mid\eta_{s}-\mid h_{\tau}\mid^{2}\mid<\frac{1}{8}\rho^{\alpha_{1}}}
   \sum\limits_{i=1}^{m}\theta_{s}^{\tau i}<\Psi_{N},\Phi_{h_{\tau},i}>\mid^{2}\\
&=& 2\sum\limits_{N:\mid\Lambda_{N}-\mid\gamma\mid^{2}\mid\geq\frac{1}{2}\rho^{2\alpha_{1}}}
    \mid
    \sum\limits_{\tau:\mid\eta_{s}-\mid h_{\tau}\mid^{2}\mid<\frac{1}{8}\rho^{\alpha_{1}}}
    \sum\limits_{i=1}^{m}\theta_{s}^{\tau i}\frac{<\Psi_{N},V\Phi_{h_{\tau},i}>}{\Lambda_{N}-\mid h_{\tau}\mid^{2}}
    \mid^{2} \\
    &=& 2\sum\limits_{N:\mid\Lambda_{N}-\mid\gamma\mid^{2}\mid\geq\frac{1}{2}\rho^{2\alpha_{1}}}
    \mid
    \sum\limits_{\tau:\mid\eta_{s}-\mid h_{\tau}\mid^{2}\mid<\frac{1}{8}\rho^{\alpha_{1}}}\sum\limits_{i=1}^{m}
    \frac{\theta_{s}^{\tau i}<\Psi_{N},V\Phi_{h_{\tau},i}>}{\Lambda_{N}-\mid\gamma\mid^{2}}  \left\{\sum\limits_{n=0}^{k}
    (\frac{\mid h_{\tau}\mid^{2}-\mid\gamma\mid^{2}}{\Lambda_{N}-\mid\gamma\mid^{2}})^{n}+O(\rho^{-(k+1)\alpha_{1}})\right\}
    \mid^{2}\\
    &\leq& 2\sum\limits_{N:\mid\Lambda_{N}-\mid\gamma\mid^{2}\mid\geq\frac{1}{2}\rho^{2\alpha_{1}}}(k+1)
    \mid
    \sum\limits_{\tau:\mid\eta_{s}-\mid h_{\tau}\mid^{2}\mid<\frac{1}{8}\rho^{\alpha_{1}}}\sum\limits_{i=1}^{m}
    \frac{\theta_{s}^{\tau i}<\Psi_{N},V\Phi_{h_{\tau},i}>}{\Lambda_{N}-\mid\gamma\mid^{2}}
    \mid^{2} \\
    &+& 2\sum\limits_{N:\mid\Lambda_{N}-\mid\gamma\mid^{2}\mid\geq\frac{1}{2}\rho^{2\alpha_{1}}}(k+1)
    \mid
    \sum\limits_{\tau:\mid\eta_{s}-\mid h_{\tau}\mid^{2}\mid<\frac{1}{8}\rho^{\alpha_{1}}}\sum\limits_{i=1}^{m}
    \frac{\theta_{s}^{\tau i}<\Psi_{N},V\Phi_{h_{\tau},i}>}{\Lambda_{N}-\mid\gamma\mid^{2}}
    \frac{\mid h_{\tau}\mid^{2}-\mid\gamma\mid^{2}}{\Lambda_{N}-\mid\gamma\mid^{2}}
    \mid^{2} \\
    &\vdots&  \\
    &+& 2\sum\limits_{N:\mid\Lambda_{N}-\mid\gamma\mid^{2}\mid\geq\frac{1}{2}\rho^{2\alpha_{1}}}(k+1)
    \mid
    \sum\limits_{\tau:\mid\eta_{s}-\mid h_{\tau}\mid^{2}\mid<\frac{1}{8}\rho^{\alpha_{1}}}\sum\limits_{i=1}^{m}
    \frac{\theta_{s}^{\tau i}<\Psi_{N},V\Phi_{h_{\tau},i}>}{\Lambda_{N}-\mid\gamma\mid^{2}}
    [\frac{\mid h_{\tau}\mid^{2}-\mid\gamma\mid^{2}}{\Lambda_{N}-\mid\gamma\mid^{2}}]^{k}
    \mid^{2} \\
    &+& 2\sum\limits_{N:\mid\Lambda_{N}-\mid\gamma\mid^{2}\mid\geq\frac{1}{2}\rho^{2\alpha_{1}}}(k+1)
    \mid
    \sum\limits_{\tau:\mid\eta_{s}-\mid h_{\tau}\mid^{2}\mid<\frac{1}{8}\rho^{\alpha_{1}}}\sum\limits_{i=1}^{m}
    \frac{\theta_{s}^{\tau i}<\Psi_{N},V\Phi_{h_{\tau},i}>}{\Lambda_{N}-\mid\gamma\mid^{2}}O(\rho^{-(k+1)\alpha_{1}})
    \mid^{2}.
\end{eqnarray*}
We estimate
\begin{equation*}
    2\sum\limits_{N:\mid\Lambda_{N}-\mid\gamma\mid^{2}\mid\geq\frac{1}{2}\rho^{2\alpha_{1}}}(k+1)
    \mid
    \sum\limits_{\tau:\mid\eta_{s}-\mid h_{\tau}\mid^{2}\mid<\frac{1}{8}\rho^{\alpha_{1}}}
    \sum\limits_{i=1}^{m}\theta_{s}^{\tau i}<\Psi_{N},V\Phi_{h_{\tau},i}>
    \frac{(\mid h_{\tau}\mid^{2}-\mid\gamma\mid^{2})^{r}}{(\Lambda_{N}-\mid\gamma\mid^{2})^{r+1}}
    \mid^{2},
\end{equation*}
where $r=0,1,2,\ldots,k$ and
\begin{equation*}
    2\sum\limits_{N:\mid\Lambda_{N}-\mid\gamma\mid^{2}\mid\geq\frac{1}{2}\rho^{2\alpha_{1}}}(k+1)
    \mid
    \sum\limits_{\tau:\mid\eta_{s}-\mid h_{\tau}\mid^{2}\mid<\frac{1}{8}\rho^{\alpha_{1}}}\sum\limits_{i=1}^{m}
    \frac{\theta_{s}^{\tau i}<\Psi_{N},V\Phi_{h_{\tau},i}>}{\Lambda_{N}-\mid\gamma\mid^{2}}
    O(\rho^{-(k+1)\alpha_{1}})
    \mid^{2}.
\end{equation*}
For an arbitrary $r=0,1,2,\ldots,k$ using Bessel's inequality,
triangle inequality, $$\mid\theta_{s}^{\tau i}\mid\leq1,\quad
\mid\mid\gamma\mid^{2}-\mid
h_{\tau}\mid^{2}\mid<\frac{1}{2}\rho^{\alpha_{1}}$$ and the relations
\eqref{M} and 
$$M_i=\sum_{j=1}^mM_{ij},\quad M_j=\sum_{i=1}^mM_{ij}\quad
M^2=\max_{1\leq i\leq m}  M_{i}\max_{1\leq j \leq m}M_{j},$$
we have
\begin{eqnarray*}
&&2\sum\limits_{N:\mid\Lambda_{N}-\mid\gamma\mid^{2}\mid\geq\frac{1}{2}\rho^{2\alpha_{1}}}(k+1)
    \mid
    \sum\limits_{\tau:\mid\eta_{s}-\mid h_{\tau}\mid^{2}\mid<\frac{1}{8}\rho^{\alpha_{1}}}\sum\limits_{i=1}^{m}
    \theta_{s}^{\tau i}<\Psi_{N},V\Phi_{h_{\tau},i}>
    \frac{(\mid h_{\tau}\mid^{2}-\mid\gamma\mid^{2})^{r}}{(\Lambda_{N}-\mid\gamma\mid^{2})^{r+1}}
    \mid^{2}\\ 
   &=&
   2\sum\limits_{N:\mid\Lambda_{N}-\mid\gamma\mid^{2}\mid\geq\frac{1}{2}\rho^{2\alpha_{1}}}
   \frac{(k+1)}{\mid\Lambda_{N}-\mid\gamma\mid^{2}\mid^{2(r+1)}}
   \mid
   \sum\limits_{\tau:\mid\eta_{s}-\mid h_{\tau}\mid^{2}\mid<\frac{1}{8}\rho^{\alpha_{1}}}\sum\limits_{i=1}^{m}
   \theta_{s}^{\tau i}<\Psi_{N},V\Phi_{h_{\tau},i}>
   (\mid h_{\tau}\mid^{2}-\mid\gamma\mid^{2})^{r}
   \mid^{2} \\
   &\leq&
   2(\frac{1}{2}\rho^{2\alpha_{1}})^{-2(r+1)}(k+1)
   \sum\limits_{N:\mid\Lambda_{N}-\mid\gamma\mid^{2}\mid\geq\frac{1}{2}\rho^{2\alpha_{1}}}
   \mid
   <\Psi_{N},
   \sum\limits_{\tau:\mid\eta_{s}-\mid h_{\tau}\mid^{2}\mid<\frac{1}{8}\rho^{\alpha_{1}}}
   \sum\limits_{i=1}^{m}
   \theta_{s}^{\tau i}
   (\mid h_{\tau}\mid^{2}-\mid\gamma\mid^{2})^{r}
   V\Phi_{h_{\tau},i}>
   \mid^{2} \\
   &\leq&
   2(\frac{1}{2}\rho^{2\alpha_{1}})^{-2(r+1)}(k+1)
   \parallel
   \sum\limits_{\tau:\mid\eta_{s}-\mid h_{\tau}\mid^{2}\mid<\frac{1}{8}\rho^{\alpha_{1}}}
   \sum\limits_{i=1}^{m}
   \theta_{s}^{\tau i}
   (\mid h_{\tau}\mid^{2}-\mid\gamma\mid^{2})^{r}
   V\Phi_{h_{\tau},i}
   \parallel^{2}
   \\
   &\leq&
   2(\frac{1}{2}\rho^{2\alpha_{1}})^{-2(r+1)}
   (k+1)
   (\sum\limits_{\tau:\mid\eta_{s}-\mid h_{\tau}\mid^{2}\mid<\frac{1}{8}\rho^{\alpha_{1}}}
   \sum\limits_{i=1}^{m}
   \parallel
   \theta_{s}^{\tau i}
   (\mid h_{\tau}\mid^{2}-\mid\gamma\mid^{2})^{r}
   V\Phi_{h_{\tau},i}>
   \parallel)^{2} \\
   &=&
   2(\frac{1}{2}\rho^{2\alpha_{1}})^{-2(r+1)}
   (k+1)
   (\sum\limits_{\tau:\mid\eta_{s}-\mid h_{\tau}\mid^{2}\mid<\frac{1}{8}\rho^{\alpha_{1}}}
   \sum\limits_{i=1}^{m}
   \mid\theta_{s}^{\tau i}\mid
   \mid\mid h_{\tau}\mid^{2}-\mid\gamma\mid^{2}\mid^{r}
   \parallel V\Phi_{h_{\tau},i}\parallel
   )^{2} \\
   &\leq&
   2(\frac{1}{2}\rho^{2\alpha_{1}})^{-2(r+1)}
   (\frac{1}{2}\rho^{\alpha_{1}})^{2r}
   (k+1)
   (\sum\limits_{\tau:\mid\eta_{s}-\mid h_{\tau}\mid^{2}\mid<\frac{1}{8}\rho^{\alpha_{1}}}\sum\limits_{i=1}^{m}
   \parallel
   V\Phi_{h_{\tau},i}
   \parallel)^{2}.
\end{eqnarray*}
Thus
\begin{eqnarray}\label{theorem_res2eq15}
\nonumber O(\rho^{-4\alpha_{1}})
    &=&
    \sum\limits_{r=0}^{k}2\sum\limits_{N:\mid\Lambda_{N}-\mid\gamma\mid^{2}\mid>\frac{1}{2}\rho^{2\alpha_{1}}}(k+1) \\
    & &
    \mid
    \sum\limits_{\tau:\mid\eta_{s}-\mid h_{\tau}\mid^{2}\mid<\frac{1}{8}\rho^{\alpha_{1}}}\sum\limits_{i=1}^{m}
    \theta_{s}^{\tau i}<\Psi_{N},V\Phi_{h_{\tau},i}>
    \frac{(\mid h_{\tau}\mid^{2}-\mid\gamma\mid^{2})^{r}}{(\Lambda_{N}-\mid\gamma\mid^{2})^{r+1}}
    \mid^{2}.
\end{eqnarray}
Similarly, we have
\begin{flushleft}
$
    2\sum\limits_{N:\mid\Lambda_{N}-\mid\gamma\mid^{2}\mid\geq\frac{1}{2}\rho^{2\alpha_{1}}}(k+1)
    \mid
    \sum\limits_{\tau:\mid\eta_{s}-\mid h_{\tau}\mid^{2}\mid<\frac{1}{8}\rho^{\alpha_{1}}}\sum\limits_{i=1}^{m}
    \frac{\theta_{s}^{\tau i}<\Psi_{N},V\Phi_{h_{\tau},i}>}{\Lambda_{N}-\mid\gamma\mid^{2}}
    O(\rho^{-(k+1)\alpha_{1}})
    \mid^{2}
$
\begin{eqnarray*}
   &=&
   2\sum\limits_{N:\mid\Lambda_{N}-\mid\gamma\mid^{2}\mid\geq\frac{1}{2}\rho^{2\alpha_{1}}}
   \frac{(k+1)}{\mid\Lambda_{N}-\mid\gamma\mid^{2}\mid^{2}} \\
   & &
   \mid
   \sum\limits_{\tau:\mid\eta_{s}-\mid h_{\tau}\mid^{2}\mid<\frac{1}{8}\rho^{\alpha_{1}}}\sum\limits_{i=1}^{m}
   \theta_{s}^{\tau i}<\Psi_{N},V\Phi_{h_{\tau},i}>
   O(\rho^{-(k+1)\alpha_{1}})
   \mid^{2} \\
   &\leq&
   2(\frac{1}{2}\rho^{2\alpha_{1}})^{-2}(k+1) \\
   & &
   \sum\limits_{N:\mid\Lambda_{N}-\mid\gamma\mid^{2}\mid\geq\frac{1}{2}\rho^{2\alpha_{1}}}
   \mid
   <\Psi_{N},
   \sum\limits_{\tau:\mid\eta_{s}-\mid h_{\tau}\mid^{2}\mid<\frac{1}{8}\rho^{\alpha_{1}}}\sum\limits_{i=1}^{m}
   \theta_{s}^{\tau i}
   O(\rho^{-(k+1)\alpha_{1}})
   V\Phi_{h_{\tau},i}>
   \mid^{2}\\
   &\leq&
   2(\frac{1}{2}\rho^{2\alpha_{1}})^{-2}(k+1)
   \parallel
   \sum\limits_{\tau:\mid\eta_{s}-\mid h_{\tau}\mid^{2}\mid<\frac{1}{8}\rho^{\alpha_{1}}}\sum\limits_{i=1}^{m}
   \theta_{s}^{\tau i}O(\rho^{-(k+1)\alpha_{1}})V\Phi_{h_{\tau},i}
   \parallel^{2}\\
   &\leq&
   2(\frac{1}{2}\rho^{2\alpha_{1}})^{-2}(k+1)
   (\sum\limits_{\tau:\mid\eta_{s}-\mid h_{\tau}\mid^{2}\mid<\frac{1}{8}\rho^{\alpha_{1}}}\sum\limits_{i=1}^{m}
   \parallel
   \theta_{s}^{\tau i}
   O(\rho^{-(k+1)\alpha_{1}})
   V\Phi_{h_{\tau},i}>
   \parallel)^{2} \\
   &\leq&
   2(\frac{1}{2}\rho^{2\alpha_{1}})^{-2}O(\rho^{-2(k+1)\alpha_{1}})(k+1)
   (\sum\limits_{\tau:\mid\eta_{s}-\mid h_{\tau}\mid^{2}\mid<\frac{1}{8}\rho^{\alpha_{1}}}\sum\limits_{i=1}^{m}
   \parallel
   V\Phi_{h_{\tau},i}
   \parallel)^{2}.
\end{eqnarray*}
\end{flushleft}
Thus
\begin{eqnarray}\label{theorem_res2eq16}
\nonumber  O(\rho^{-8\alpha_{1}})
  &=&
  2\sum\limits_{N:\mid\Lambda_{N}-\mid\gamma\mid^{2}\mid\geq\frac{1}{2}\rho^{2\alpha_{1}}}(k+1)
  \\
  & &
  \mid\sum\limits_{\tau:\mid\eta_{s}-\mid h_{\tau}\mid^{2}\mid<\frac{1}{8}\rho^{\alpha_{1}}}\sum\limits_{i=1}^{m}
  \frac{\theta_{s}^{\tau i}<\Psi_{N},V\Phi_{h_{\tau},i}>}{\Lambda_{N}-\mid\gamma\mid^{2}}
  O(\rho^{-(k+1)\alpha_{1}})
  \mid^{2}.
\end{eqnarray}
By the inequality \eqref{theorem_res2eq13} and the estimations
\eqref{theorem_res2eq14}, \eqref{theorem_res2eq15} and
\eqref{theorem_res2eq16}, we have
\begin{equation*}
    O(\rho^{-2\alpha_{1}})=\sum\limits_{N:\mid\Lambda_{N}-\mid\gamma\mid^{2}\mid\geq\frac{1}{2}\rho^{2\alpha_{1}}}
    \mid\sum\limits_{\tau=1}^{b_{k}}\sum\limits_{i=1}^{m}\theta_{s}^{\tau i}<\Psi_{N},\Phi_{h_{\tau},i}>\mid^{2}.
\end{equation*}
Therefore, from the decomposition \eqref{theorem_res2eq12} we have
\begin{equation*}
   1-O(\rho^{-2\alpha_{1}})=\sum\limits_{N:\mid\Lambda_{N}-\mid\gamma\mid^{2}\mid<\frac{1}{2}\rho^{2\alpha_{1}}}
   \mid\sum\limits_{\tau=1}^{b_{k}}\sum\limits_{i=1}^{m}\theta_{s}^{\tau i}<\Psi_{N},\Phi_{h_{\tau},i}>\mid^{2}.
\end{equation*}
Since the number of indexes $N$ satisfying
$\mid\Lambda_{N}-\mid\gamma\mid^{2}\mid<\frac{1}{2}\rho^{2\alpha_{1}}$
is less then $\rho^{d-1}$, we have
\begin{equation*}
    1-O(\rho^{-2\alpha_{1}})\leq\rho^{d-1}
    \mid\sum\limits_{\tau=1}^{b_{k}}\sum\limits_{i=1}^{m}\theta_{s}^{\tau i}<\Psi_{N},\Phi_{h_{\tau},i}>\mid^{2}
\end{equation*}
which implies together with the relation \eqref{theorem_res2eq11}
that
\begin{equation}\label{theorem_res2eq17}
    \mid A(N,h_{1},h_{2},\ldots,h_{b_{k}})\cdot \theta_{s}\mid^{2}\geq\frac{1-O(\rho^{-2\alpha_{1}})}{\rho^{d-1}}.
\end{equation}
It follows from the equation \eqref{theorem_res2eq10} and the
estimation \eqref{theorem_res2eq17} that
\begin{equation*}
    \Lambda_{N}=\lambda_{i}+\eta_{s}+
    \frac{O(\rho^{-p\alpha+\frac{d}{4}3^{d}\alpha})}{O(\rho^{-\frac{d-1}{2}})}
\end{equation*}
from which we get the result.
\end{proof}

\section{Asymptotic Formulas for the Eigenvalues in a \\Single
Resonance Domain}

\hspace{.2in} Now, we investigate in detail the eigenvalues of
$L(V)$ in a single resonance domain. Namely, we find the
relation between the eigenvalues of $L(V)$ in a single
resonance domain and the eigenvalues of the Sturm-Liouville
operators. In order the inequalities for $\alpha<\frac{1}{d+20}$,

\begin{equation}\label{alfa1}
2\alpha_2 - \alpha_1 + (d+3)\alpha < 1
\end{equation}
and
\begin{equation}\label{alfa2}
\alpha_2 > 2\alpha_1,
\end{equation}
 to be satisfied, we can choose $\alpha, \,\,
\alpha_1$ and $ \alpha_2$ as follows
$$
\alpha=\frac{1}{d+p},\,\,\,\alpha_1=\frac{p_2}{d+p}
,\,\,\,\alpha_2=\frac{2p_2+1}{d+p},
$$
where $p_2=[\frac{p-5}{3}]-1$ and $[\frac{p-5}{3}]$ is the integer
part of the number $\frac{p-5}{3}$.\newline
 Let $\gamma \in
 V_{\delta}(\rho^{\alpha_{1}})\setminus E_{2}$, $\delta \in
 \frac{\Gamma}{2}\backslash\{e_i\}$, where $\delta$ is minimal in its direction.
Consider the following sets :
\begin{eqnarray}
&B_{1}(\delta)&=\{b:b=n\delta \,,n\in Z \,,\,|b|<\frac{1}{2}\rho^{
\frac{1}{2}\alpha_{2}}\},\nonumber\\
&B_{1}(\gamma)&=\gamma+B_{1}(\delta)=\{\gamma+b:b\in
B_{1}(\delta)\},\nonumber\\
&B_{1}(\gamma,p_{1})&=B_{1}(\gamma)+\Gamma(p_{1}\rho^{\alpha}).\nonumber\hspace{1.28in}
\end{eqnarray}
As before, denote by $h_{s}$,\,\,$s=1,2,...,b_{1}$ the vectors of
$B_{1}(\gamma,p_{1})$, where $b_{1}$ is the number of vectors in
$B_{1}(\gamma,p_{1})$. Then the matrix $C(\gamma,\delta)$ is
defined as:
\begin{equation}\label{appmat}
C(\gamma,\delta)=(c_{ij})=\left[
  \begin{array}{cccc}
    \mid h_{1}\mid^{2} I & V_{h_{1}-h_{2}} & \cdots & V_{h_{1}-h_{b_{1}}}\\
    V_{h_{2}-h_{1}} & \mid h_{2}\mid^{2} I & \cdots& V_{h_{2}-h_{b_{1}}} \\
    \vdots &  &  &  \\
    V_{h_{b_{1}}-h_{1}} & V_{h_{b_{1}}-h_{2}} & \cdots & \mid h_{b_{1}}\mid^{2} I\\
  \end{array}
\right],
\end{equation}
\newline Also we define the matrix
$D(\gamma,\delta)=(c_{ij})$ for $i,j=1,2,...,ma_{1} $, where
$h_{1},h_{2},...,h_{a_{1}}$ are the vectors of
$B_{1}(\gamma,p_1)\bigcap\{\gamma +n \delta : n\in Z\}$, and
$a_{1}$ is the number of vectors in
$B_{1}(\gamma,p_1)\bigcap\{\gamma + n \delta :n\in Z\}$. Clearly
$a_{1}=O(\rho^{ \frac{1}{2}\alpha_{2}})$.

\begin{lemma}\label{lemma421}
a) If $\eta_{s}$ is an eigenvalue of the matrix
$C(\gamma,\delta)$ such that \newline $|\eta_{s}-|h_{s}|^{2}|<M
\,\,\mbox{for} \,\,s=1,2,...,a_{1}$ , then
$$|\eta_{s}-|h_{\tau}|^{2}|>\frac{1}{4}\rho^{\alpha_{2}} \,\,\, \mbox{for} \,\,\, \forall
\tau=a_{1}+1,a_{1}+2,...,b_{1}.$$ b) If $\eta_{s}$ is an
eigenvalue of the matrix $C(\gamma,\delta)$ such that
$|\eta_{s}-|h_{s}|^{2}|<M \,\,\mbox{for}
\,\,s=a_{1}+1,a_{1}+2,...,b_{1}$ then
$$|\eta_{s}-|h_{\tau}|^{2}|>\frac{1}{4}\rho^{\alpha_{2}} \,\,\, \mbox{for} \,\,\, \forall
\tau=1,2,...,a_{1}.$$

\end{lemma}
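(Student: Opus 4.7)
The strategy is to reduce both parts of the lemma to a single geometric separation estimate between the diagonal entries $|h_s|^2$ and $|h_\tau|^2$ of $C(\gamma,\delta)$ for an in-line index ($s\le a_1$) and an off-line index ($\tau>a_1$). Because $M$ is a uniform constant — the operator-norm type bound coming from \eqref{decompsum} — the triangle inequality gives
\begin{equation*}
|\eta_s-|h_\tau|^2|\;\ge\;\bigl||h_s|^2-|h_\tau|^2\bigr|-|\eta_s-|h_s|^2|\;>\;\bigl||h_s|^2-|h_\tau|^2\bigr|-M,
\end{equation*}
so it suffices to prove $\bigl||h_s|^2-|h_\tau|^2\bigr|\ge\tfrac12\rho^{\alpha_2}(1-o(1))$; this exceeds $\tfrac14\rho^{\alpha_2}+M$ for all sufficiently large $\rho$. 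Since the estimate is symmetric in $s$ and $\tau$, parts (a) and (b) will follow from the same argument.

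To obtain the gap I would compare each of $|h_s|^2$ and $|h_\tau|^2$ to $|\gamma|^2$ separately. For the in-line case $h_s=\gamma+n_s\delta$, the resonance condition $\gamma\in V_\delta(\rho^{\alpha_1})$ yields $2\gamma\cdot\delta=-|\delta|^2+O(\rho^{\alpha_1})$, so
\begin{equation*}
|h_s|^2-|\gamma|^2\;=\;(n_s^2-n_s)|\delta|^2+O(|n_s|\,\rho^{\alpha_1}).
\end{equation*}
The size constraint $|n_s\delta|\le\tfrac12\rho^{\alpha_2/2}+O(\rho^\alpha)$ forced by $h_s\in B_1(\gamma,p_1)$ bounds the quadratic term by $\tfrac14\rho^{\alpha_2}(1+o(1))$, while $\alpha_2>2\alpha_1$ makes $|n_s|\rho^{\alpha_1}=o(\rho^{\alpha_2})$. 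Hence $\bigl||h_s|^2-|\gamma|^2\bigr|\le\tfrac14\rho^{\alpha_2}(1+o(1))$.

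For the off-line case, $h_\tau\notin\gamma+\mathbb{Z}\delta$, so in any decomposition $h_\tau=\gamma+n_\tau\delta+a$ coming from $B_1(\gamma,p_1)=B_1(\gamma)+\Gamma(p_1\rho^\alpha)$, the vector $a$ must be linearly independent from $\delta$ — otherwise $h_\tau$ would again lie on the line. Writing
\begin{equation*}
|h_\tau|^2-|\gamma|^2\;=\;(|\gamma+a|^2-|\gamma|^2)+(|\gamma+n_\tau\delta|^2-|\gamma|^2)+2n_\tau\delta\cdot a,
\end{equation*}
the hypothesis $\gamma\in V_\delta(\rho^{\alpha_2})\setminus E_2$ applied to the linearly independent pair $(\delta,a)$ gives $\bigl||\gamma+a|^2-|\gamma|^2\bigr|\ge\rho^{\alpha_2}$ (exactly as in Case~2 of Lemma~\ref{lemres_res1}). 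The middle summand is bounded by $\tfrac14\rho^{\alpha_2}(1+o(1))$ via the previous paragraph, and $|2n_\tau\delta\cdot a|\le\rho^{\alpha_2/2}\cdot p_1\rho^\alpha=o(\rho^{\alpha_2})$ because $\alpha<\alpha_2/2$. Combining, $\bigl||h_\tau|^2-|\gamma|^2\bigr|\ge\tfrac34\rho^{\alpha_2}(1-o(1))$.

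The reverse triangle inequality closes the argument: $\bigl||h_s|^2-|h_\tau|^2\bigr|\ge\tfrac12\rho^{\alpha_2}(1-o(1))$, which proves (a); part (b) follows by interchanging the roles of $s$ and $\tau$ in the same chain of inequalities. The main technical care lies in the error bookkeeping — every $o(\rho^{\alpha_2})$ loss is controlled by the standing hypotheses $\alpha_2>2\alpha_1$ (needed for the $|n_s|\rho^{\alpha_1}$ term) and $\alpha<\alpha_2/2$ (needed for the $n_\tau\delta\cdot a$ cross term), both ensured by the choices of $\alpha,\alpha_1,\alpha_2$ fixed at the start of the section. A secondary subtlety, handled tacitly in Lemma~\ref{lemres_res1}, is the use of $\gamma\notin E_2$ with a pair $(\delta,a)$ whose second member need not be minimal in its direction.
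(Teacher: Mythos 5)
Your argument is correct and is essentially the paper's own proof: both reduce the lemma to the diagonal gap $\bigl||h_s|^2-|h_\tau|^2\bigr|\gtrsim\rho^{\alpha_2}$ for $s\le a_1<\tau\le b_1$, derived from $\gamma\in V_\delta(\rho^{\alpha_1})$ (to control the in-line terms $n\langle\gamma,\delta\rangle$, $|n\delta|^2$) together with $\gamma\notin E_2$, hence $\bigl|2\langle\gamma,a\rangle\bigr|>\rho^{\alpha_2}-c\rho^{2\alpha}$ for the off-line component $a\notin\delta\mathbb{R}$, and then conclude by the triangle inequality using the hypothesis $|\eta_s-|h_s|^2|<M$. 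Your only deviation is organizational — comparing $|h_s|^2$ and $|h_\tau|^2$ separately to $|\gamma|^2$ instead of expanding $|h_\tau|^2-|h_s|^2$ directly — which yields the constant $\tfrac12$ where the paper gets $\tfrac13$ (via $\tfrac34$), and changes nothing essential.
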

\begin{proof}
First we prove
\begin{equation}\label{lemma}
||h_{\tau}|^2-|h_s|^2|\geq\frac{1}{3}\rho^{\alpha_{2}}, \hspace{.2in}
\forall s\leq a_1 , \hspace{.2in} \forall \tau> a_1.
\end{equation}
By definition, if $s\leq a_1$ then $h_s=\gamma+n\delta$, where
$|n\delta|<\frac{1}{2}\rho^{\frac{1}{2}\alpha_{2}}+p_1\rho^{\alpha}$.
If $\tau> a_1$ then $h_{\tau}=\gamma+s^{'}\delta+a$, where
$|s^{'}\delta|<\frac{1}{2}\rho^{\frac{1}{2}\alpha_{2}}$,
$a\in\Gamma(p_1\rho^{\alpha})\setminus\delta R$. Therefore
$$
|h_{\tau}|^2-|h_s|^2=2\langle\gamma,a\rangle+2s^{'}\langle\delta,a\rangle+
2s^{'}\langle\gamma,\delta\rangle+|s^{'}\delta|^2+|a|^2-
2n\langle\gamma,\delta\rangle-|n\delta|^2.
$$
Since $\gamma\notin V_a(\rho^{\alpha_2})$, $|a|<p_1\rho^{\alpha}$,
we have
$$
|2\langle\gamma,a\rangle|>\rho^{\alpha_2}-c_9\rho^{2\alpha}.
$$
The relation $\gamma\in V_{\delta}(\rho^{\alpha_1})$ and the
inequalities for $s^{'}$ and $n$ imply that
\begin{eqnarray*}
2s^{'}\langle\gamma,\delta\rangle+2s^{'}\langle\gamma,a\rangle+ |a|^2-
2n\langle\gamma,\delta\rangle=O(\rho^{\frac{1}{2}\alpha_{2}+\alpha_1}),
\newline \\
||s^{'}\delta|^2-|n\delta|^2|<\frac{1}{4}\rho^{\alpha_{2}}
+c_{10}\rho^{\frac{1}{2}\alpha_{2}+\alpha}.
\end{eqnarray*}
Thus (\ref{lemma}) follows from these relations, since
$\frac{1}{2}\alpha_2+\alpha_1<\alpha_2$ and
$\frac{1}{2}\alpha_2+\alpha<\alpha_2$.

The eigenvalues of $D(\gamma,\delta)$ and $C(\gamma,\delta)$ lay
in $M$-neighborhood of the numbers $|h_{k}|^2$ for $k=1,2,...,
a_1$ and for $k=1,2,...,b_1$, respectively. The inequality
(\ref{lemma}) shows that one can enumerate the eigenvalues
$\eta_{s}$ ($s=1,2,...,mb_1$) of $C(\gamma,\delta)$ such that
$\eta_s$ for $s\leq  ma_1$ lay in $M$-neighborhood of the
numbers $|h_k|^2$ for $k\leq a_1$ and $\eta_s$ for $s>ma_1$ lay
in $M$-neighborhood of the numbers $|h_k|^2$ for $k>a_1$.Then by
(\ref{lemma}), we get
\begin{equation}\label{lemma421}
|\eta_s-|h_{\tau}|^2|>\frac{1}{4}\rho^{\alpha_{2}},
\end{equation}
for  $s\leq ma_1$, $ \tau> a_1$ and $s> ma_1$, $ \tau\leq a_1$.

\end{proof}

\begin{theorem}\label{theo421}
Let $\gamma \in V_{\delta}(\rho^{\alpha_{1}})\setminus E_{2}$ and
$|\gamma|\sim \rho$. Then, for any eigenvalue
$\eta_{s}(\gamma)$ of the matrix $C(\gamma,\delta)$ satisfying
$|\eta_{s}-|h_{s}|^{2}|<M$,$\,s=1,2,...,a_{1}$,  there exists
an eigenvalue $\widetilde{\eta}_{k(s)}$ of the matrix
$D(\gamma,\delta)$ such that
$$\eta_{s}=\widetilde{\eta}_{k(s)}+O(\rho^{-\frac{3}{4}\alpha_{2}}),$$
\end{theorem}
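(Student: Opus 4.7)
The plan is to block-decompose $C(\gamma,\delta)$ according to the partition $\{h_1,\dots,h_{a_1}\}\cup\{h_{a_1+1},\dots,h_{b_1}\}$ of $B_1(\gamma,p_1)$ (vectors on the line $\gamma+n\delta$ versus the rest), and to eliminate the second block as a perturbation by exploiting the spectral separation guaranteed by Lemma~\ref{lemma421}. Concretely, write
\[
C(\gamma,\delta)=\begin{pmatrix} D(\gamma,\delta) & E \\ E^{T} & F \end{pmatrix},
\]
where the top-left block is exactly $D(\gamma,\delta)$, and decompose a unit eigenvector $\theta_s=(\theta_s',\theta_s'')$ with $\theta_s'\in\mathbb{R}^{ma_1}$. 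The eigenvalue equation $C\theta_s=\eta_s\theta_s$ splits into
\begin{align*}
D\,\theta_s' + E\,\theta_s''&=\eta_s\,\theta_s',\\
E^{T}\theta_s' + F\,\theta_s''&=\eta_s\,\theta_s''.
\end{align*}

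The key step is the resolvent bound $\|(\eta_sI-F)^{-1}\|=O(\rho^{-\alpha_2})$. The Hermitian block $F$ has diagonal entries $|h_\tau|^{2}I$, $\tau>a_1$, and off-diagonal entries that are Fourier coefficients of $V$ with absolute values summable to a constant by \eqref{decompsum}; hence by a Gershgorin/Bauer--Fike argument the spectrum of $F$ lies in the union of $M$-neighborhoods of the numbers $|h_\tau|^{2}$ with $\tau>a_1$. Combining the hypothesis $|\eta_s-|h_s|^{2}|<M$ with the separation $||h_s|^{2}-|h_\tau|^{2}|\geq\tfrac{1}{3}\rho^{\alpha_2}$ established in \eqref{lemma} yields $\mathrm{dist}(\eta_s,\mathrm{spec}(F))>\tfrac{1}{4}\rho^{\alpha_2}$, from which the desired resolvent bound follows.

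Given this, I would solve the lower block for
\[
\theta_s''=(\eta_sI-F)^{-1}E^{T}\theta_s',
\]
obtaining $\|\theta_s''\|=O(\rho^{-\alpha_2})\|\theta_s'\|$ because $\|E\|\leq\|B\|$ is bounded uniformly in $\rho$ (Schur's test using $\sum_\gamma|v_{ij\gamma}|<\infty$). The normalization $\|\theta_s'\|^2+\|\theta_s''\|^2=1$ then forces $\|\theta_s'\|\geq\tfrac{1}{\sqrt 2}$ for large $\rho$. Substituting back into the first block gives
\[
(D-\eta_sI)\theta_s'=-E(\eta_sI-F)^{-1}E^{T}\theta_s',
\]
whose right-hand side has norm $O(\rho^{-\alpha_2})\|\theta_s'\|$. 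Thus $\theta_s'$ is an approximate eigenvector of the Hermitian matrix $D(\gamma,\delta)$ with residual $O(\rho^{-\alpha_2})$, and the standard quasi-mode lemma produces an eigenvalue $\widetilde\eta_{k(s)}$ of $D(\gamma,\delta)$ with $|\eta_s-\widetilde\eta_{k(s)}|=O(\rho^{-\alpha_2})$, which in particular yields the claimed bound $O(\rho^{-\frac{3}{4}\alpha_2})$.

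The main technical hurdle is the resolvent estimate: one must control the spectrum of the very large Hermitian block $F$ in a manner that keeps the constants independent of $\rho$ and $b_1$. This forces the use of the summability \eqref{decompsum} (via Schur's test) rather than a crude matrix-size bound, and it is here that the hypothesis $|\eta_s-|h_s|^2|<M$ with $s\leq a_1$ together with the separation inequality from Lemma~\ref{lemma421} is decisive.
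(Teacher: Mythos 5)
Your argument is correct, and it reaches the stated conclusion (indeed a slightly stronger one) by a route that differs in execution from the paper's. The paper does not form the Schur complement: it works componentwise, writing the eigenvalue equation $(\eta_s-|h_\tau|^2)\theta_s^{\tau i}=\theta_s\cdot Be_{\tau,i}$ for each $\tau\le a_1$, splitting the sum over $\xi$ into the line part $\xi\le a_1$ and the tail $\xi>a_1$, and bounding the tail of each single equation by $O(\rho^{-\alpha_2})$ using the binding formula together with Lemma~\ref{lemma421}; it then assembles the $a_1$ equations into $(D-\eta_s I)[\theta_s^1,\dots,\theta_s^{a_1}]^t=[O(\rho^{-\alpha_2}),\dots]^t$, shows $\sum_{\tau\le a_1,i}|\theta_s^{\tau i}|^2\ge 1-O(\rho^{-2\alpha_2})$, and inverts $D-\eta_s I$, at the cost of the $\ell^2$-accumulation factor $\sqrt{a_1}=O(\rho^{\frac14\alpha_2})$, which is exactly why the paper's error is $O(\sqrt{a_1}\,\rho^{-\alpha_2})=O(\rho^{-\frac34\alpha_2})$. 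You instead eliminate the off-line block at the operator level: the localization of $\mathrm{spec}(F)$ in $M$-neighborhoods of $|h_\tau|^2$, $\tau>a_1$ (the same observation that underlies the paper's Lemma~\ref{lemma421}), plus the separation \eqref{lemma} and the hypothesis $|\eta_s-|h_s|^2|<M$, gives $\|(\eta_sI-F)^{-1}\|=O(\rho^{-\alpha_2})$ for the Hermitian block $F$, and then $\theta_s''=(\eta_sI-F)^{-1}E^{T}\theta_s'$, $\|E\|\le\|B\|\le M$, and the quasi-mode bound for the Hermitian $D$ give $|\eta_s-\widetilde\eta_{k(s)}|=O(\rho^{-\alpha_2})$ with no $\sqrt{a_1}$ loss. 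Both proofs rest on the same two pillars (the spectral gap of Lemma~\ref{lemma421}/\eqref{lemma} and $\|B\|\le M$ from the summability \eqref{decompsum}); your operator-norm/Schur-complement formulation is cleaner and sharper, while the paper's entrywise version is the one it reuses almost verbatim in Theorem~\ref{lasttheorem} for the infinite matrix $T(\gamma,\delta)$, and its weaker exponent $-\frac34\alpha_2$ is all that is needed downstream.
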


\begin{proof} Let $\eta_{s}$ be an
eigenvalue of $C(\gamma,\delta)$, and
$\theta_{s}=(\theta_{s}^1,\theta_{s}^2,...,\theta_{s}^{b_1})_{mb_1\times1}$ be the corresponding
normalized eigenvector, $|\theta_s|=1$, where $\theta_s^{\tau}=(\theta_s^{\tau 1},\theta_s^{\tau 2},\ldots,\theta_s^{\tau m})_{m\times 1}$, $\tau=1,2,\ldots, b_1.$ Denote by $\{e_{\tau,i}\}_{i=1,2,\ldots,m}$ be the set of eigenvectors of $A(\gamma,\delta)$ corresponding to the $\tau$-th
eigenvalue $|h_{\tau}|^{2},$  where
\begin{equation}\label{theorem_res2eq7}
   A= \left[
      \begin{array}{ccc}
        \mid h_{1}\mid^{2}I &  & 0 \\
         & \ddots &  \\
        0 &  & \mid h_{b_{1}}\mid^{2}I \\
      \end{array}
    \right].
\end{equation}
\newline The binding formula for
$C(\gamma,\delta)$ and $A(\gamma,\delta)$,
is :
\begin{equation}\label{bindca}
(\eta_{s}-|h_{\tau}|^{2})\theta_{s}^{\tau i}= \theta_{s}\cdot Be_{\tau,i}
\end{equation}
for any $\tau=1,2,\ldots, b_1$ and $i=1,2,\ldots,m.$  \\

Substituting the orthogonal decomposition
$Be_{\tau,i}=\sum\limits_{k_1=1,2,\ldots,m \atop \xi=1,2,\ldots,b_1}\langle Be_{\tau ,i},e_{\xi,k_1}\rangle
e_{\xi,k_1}$ in equation \ref{bindca}, we get
\begin{eqnarray*}
(\eta_{s}-|h_{\tau}|^{2})\theta_{s}^{\tau i}=
\theta_{s}\cdot  \sum\limits_{k_1=1,2,\ldots,m \atop \xi=1,2,\ldots,b_1}\langle Be_{\tau ,i},e_{\xi,k_1}\rangle
e_{\xi,k_1}       = \sum\limits_{k_1=1,2,\ldots,m \atop \xi=1,2,\ldots,b_1}\langle Be_{\tau ,i},e_{\xi,k_1}\rangle \theta_{s}\cdot e_{\xi,k_1}\nonumber \\
=\sum\limits_{k_1=1,2,\ldots,m \atop \xi=1,2,\ldots,b_1}\langle Be_{\tau ,i},e_{\xi,k_1}\rangle \theta_{s}^{\xi k_1}.
\end{eqnarray*}
It is clear that, 
\begin{displaymath}
\langle Be_{\tau ,i},e_{\xi,k_1}\rangle=\left\{\begin{array}{lll}0&\text{if}&\xi=\tau\\v_{k_1ih_{\xi}-h_{\tau}}&\text{if}&\xi\neq\tau
\end{array}\right.
\end{displaymath}
this implies
\begin{displaymath}
\sum\limits_{k_1=1,2,\ldots,m \atop \xi=1,2,\ldots,b_1}\langle Be_{\tau ,i},e_{\xi,k_1}\rangle=\sum_{\substack{k_1=1,2,\ldots,m \\ \xi=1,\ldots,b_1\\ \xi \neq \tau} }v_{k_1ih_{\xi}-h_{\tau}}
\end{displaymath}
and thus one has
\begin{eqnarray}\label{formulahi}
(\eta_{s}-|h_{\tau}|^{2})\theta_{s}^{\tau i}&=&\sum_{\substack{k_1=1,2,\ldots,m \\ \xi=1,\ldots,b_1\\ \xi \neq \tau} }v_{k_1ih_{\xi}-h_{\tau}} \theta_{s}^{\xi k_1}\nonumber\\&=&\sum_{\substack{k_1=1,2,\ldots,m \\ \xi=1,\ldots,a_1\\ \xi \neq \tau} }v_{k_1ih_{\xi}-h_{\tau}} \theta_{s}^{\xi k_1}+\sum_{\substack{k_1=1,2,\ldots,m \\ \xi=a_1+1,\ldots,b_1\\ \xi \neq \tau} }v_{k_1ih_{\xi}-h_{\tau}} \theta_{s}^{\xi k_1}.
\end{eqnarray}
Now, taking any $\eta_{s} \in [|h_{s}|^{2}-M,|h_{s}|^{2}+M|]
\,\,,\,\,s=1,2,...,a_{1}$ and writing the equation
(\ref{formulahi}) for all $ h_{\tau} \,\,,\tau=1,2,...,a_{1},$ we get
the system of linear algebraic equations:
\begin{eqnarray}\label{systemhi}
(\eta_{s}-|h_{1}|^{2})\theta_{s}^{1i}-\sum_{\substack{k_1=1,2,\ldots,m \\ \xi=1,2,\ldots,a_1\\ \xi \neq 1} }v_{k_1ih_{\xi}-h_{1}} \theta_{s}^{\xi k_1}=\sum_{\substack{k_1=1,2,\ldots,m \\ \xi=a_1+1,\ldots,b_1\\ \xi \neq 1} }v_{k_1ih_{\xi}-h_{1}} \theta_{s}^{\xi k_1}\nonumber \\
(\eta_{s}-|h_{2}|^{2})\theta_{s}^{2i}-\sum_{\substack{k_1=1,2,\ldots,m \\ \xi=1,2,\ldots,a_1\\ \xi \neq 2} }v_{k_1ih_{\xi}-h_{2}} \theta_{s}^{\xi k_1}=\sum_{\substack{k_1=1,2,\ldots,m \\ \xi=a_1+1,\ldots,b_1\\ \xi \neq 2} }v_{k_1ih_{\xi}-h_{2}} \theta_{s}^{\xi k_1}\nonumber 
\\ \vdots \nonumber \\
(\eta_{s}-|h_{a_1}|^{2})\theta_{s}^{a_1i}-\sum_{\substack{k_1=1,2,\ldots,m \\ \xi=1,2,\ldots,a_1\\ \xi \neq a_1} }v_{k_1ih_{\xi}-h_{a_1}} \theta_{s}^{\xi k_1}=\sum_{\substack{k_1=1,2,\ldots,m \\ \xi=a_1+1,\ldots,b_1\\ \xi \neq a_1} }v_{k_1ih_{\xi}-h_{a_1}} \theta_{s}^{\xi k_1}
\end{eqnarray}

Using the binding formula (\ref{bindca}) and the relation 
\eqref{lemma421}, we find an estimation for the right hand side of
the above system. That is,
\begin{align*}
|\sum_{\substack{k_1=1,2,\ldots,m \\ \xi=a_1+1,\ldots,b_1\\ \xi \neq s^{'}} }v_{k_1ih_{\xi}-h_{s^{'}}} \theta_{s}^{\xi k_1}|&=|\sum_{\substack{k_1=1,2,\ldots,m \\ \xi=a_1+1,\ldots,b_1\\ \xi \neq s^{'}} }v_{k_1ih_{\xi}-h_{s^{'}}} \frac{\theta_s \cdot B e_{\xi, k_1}}{(\eta_s-|h_{\xi}|^2)}|\\&\leq \sum_{\substack{k_1=1,2,\ldots,m \\ \xi=a_1+1,\ldots,b_1\\ \xi \neq s^{'}} }|v_{k_1ih_{\xi}-h_{s^{'}}}|\frac{\|\theta_s\|\cdot \|B\|\cdot \|e_{\xi, k_1}\|}{(\eta_s-|h_{\xi}|^2)}\\&\leq 4 \rho^{-\alpha_2}M \sum_{\substack{k_1=1,2,\ldots,m \\ \xi=a_1+1,\ldots,b_1\\ \xi \neq s^{'}} }|v_{k_1ih_{\xi}-h_{s^{'}}}|\\&\leq 4 \rho^{-\alpha_2}M^2\\&=O(\rho^{-\alpha_2})
\end{align*}

for $\forall s^{'}=1,2,\ldots,a_{1}$. Then, the system (\ref{systemhi})
becomes
$$(D(\gamma,\delta)-\eta_{s}I)[\theta_s^1,\theta_s^2,\ldots,\theta_s^{a_1}]^{t}=[O(\rho^{-\alpha_{2}}),O(\rho^{-\alpha_{2}}),...,O(\rho^{-\alpha_{2}})]^{t},$$
or
\begin{equation}\label{ordervi1}
[\theta_s^1,\theta_s^2,\ldots,\theta_s^{a_1}]^{t}=(D(\gamma,\delta)-\eta_{s}I)^{-1}[O(\rho^{-\alpha_{2}}),O(\rho^{-\alpha_{2}}),...,O(\rho^{-\alpha_{2}})]^{t}.
\end{equation}

Using the binding formula (\ref{bindca}) and relation \ref{lemma421}
we have :
\begin{eqnarray*}
\sum_{\substack{i=1,2,\ldots,m \\ \tau=a_1+1,\ldots,b_1} }|\theta_s^{\tau i}|^2&=&\sum\limits_{\substack{i=1,2,\ldots,m \\ \tau=a_1+1,\ldots,b_1} }|\frac{\theta_s \cdot B e_{\tau ,i}}{(\eta_s-|h_{\tau}|^2)}|^2\\&
 =&\sum\limits_{\substack{i=1,2,\ldots,m \\ \tau=a_1+1,\ldots,b_1} }\frac{|B\theta_s \cdot e_{\tau i}|^2}{(\eta_s-|h_{\tau}|^2)^2} \\&\leq&16M^2\rho^{-2\alpha_{2}}\\&=&O(\rho^{-2\alpha_{2}}),
\end{eqnarray*}
and thus, by Parseval's identity we get :
\begin{eqnarray}\label{ordervi2}
\sum\limits_{\substack{i=1,2,\ldots,m \\\nonumber \tau=1,\ldots,a_1} }|\theta_s^{\tau i}|^2&=&\sum\limits_{\substack{i=1,2,\ldots,m \\\nonumber \tau=1,\ldots,b_1} }|\theta_s^{\tau i}|^2-\sum\limits_{\substack{i=1,2,\ldots,m \\\nonumber \tau=a_1+1,\ldots,b_1} }|\theta_s^{\tau i}|^2\\&\geq& 1-O(\rho^{-2\alpha_{2}})\nonumber.
\end{eqnarray}
Now, taking the norm of (\ref{ordervi1}) and using the above
inequality we have :
$$\sqrt{1-O(\rho^{-2\alpha_{2}})}<(\sum_{\substack{i=1,2,\ldots,m \\ \tau=1,\ldots,a_1} }|\theta_s^{\tau i}|^2)^{\frac{1}{2}}\leq
\|(D(\gamma,\delta)-\eta_{s}I)^{-1}\|O(\sqrt{a_{1}}\rho^{-\alpha_{2}})$$
Thus,
$$max|\eta_{s}-\widetilde{\eta}_{k(s)}|^{-1}>\frac{\sqrt{1-O(\rho^{-2\alpha_{2}})}}{\sqrt{a_{1}}\rho^{-\alpha_{2}}},$$
or
$$min|\eta_{s}-\widetilde{\eta}_{k(s)}|=O(\sqrt{a_{1}}\rho^{-\alpha_{2}})=O(\rho^{-\frac{3}{4}\alpha_{2}})$$
where the maximum (minimum) is taken over all $
\widetilde{\eta}_{k(s)}\,\,,s=1,2,...,a_{1}$. So, the result
follows. \end{proof}

\begin{theorem}\label{theo423}
 For any eigenvalue $\widetilde{\eta}_{k}$ of the matrix
$D(\gamma,\delta)$, there exists an eigenvalue $\eta_{s(k)}$ of
the matrix $C(\gamma,\delta)$ such that
$$\eta_{s(k)}=\widetilde{\eta}_{k}+O(\rho^{-\frac{1}{2}\alpha_{2}})$$
\end{theorem}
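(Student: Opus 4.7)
My plan is to construct an approximate eigenvector of $C(\gamma,\delta)$ from a given eigenvector of $D(\gamma,\delta)$ and then use the Hermiticity of $C(\gamma,\delta)$ to conclude the existence of a nearby true eigenvalue. Since $D(\gamma,\delta)$ is the principal submatrix of $C(\gamma,\delta)$ corresponding to the indices $\xi=1,2,\ldots,a_1$, I will use the block decomposition
\[
C(\gamma,\delta)=\begin{pmatrix} D(\gamma,\delta) & P \\ P^{\ast} & R \end{pmatrix},
\]
where $P$ is the $ma_1\times m(b_1-a_1)$ off-diagonal block with entries $V_{h_\xi-h_\tau}$ for $\xi\le a_1<\tau$, and $R$ is the $m(b_1-a_1)\times m(b_1-a_1)$ trailing block.

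First, let $\widetilde\theta_k=(\widetilde\theta_k^{1},\ldots,\widetilde\theta_k^{a_1})$ be a normalized eigenvector of $D(\gamma,\delta)$ corresponding to $\widetilde\eta_k$, and zero-pad it to the vector $\widehat\theta=(\widetilde\theta_k,0)\in\mathbb{C}^{mb_1}$. A direct block multiplication then gives
\[
(C(\gamma,\delta)-\widetilde\eta_k I)\widehat\theta=(0,\,P^{\ast}\widetilde\theta_k).
\]
The crude estimate $|P^{\ast}\widetilde\theta_k|\le\|B\|\le M$ only yields an $O(1)$ bound for this residual, so $\widehat\theta$ alone is not yet a sufficiently good approximate eigenvector.

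To gain a factor of $\rho^{-\alpha_2}$, I will correct $\widehat\theta$ by a vector in the $R$-block, exploiting the spectral gap between $\widetilde\eta_k$ and the spectrum of $R$. By Lemma \ref{lemma421}(b) together with the estimate $\bigl||h_\tau|^{2}-|h_s|^{2}\bigr|\ge\tfrac{1}{3}\rho^{\alpha_2}$ proved in it, every eigenvalue of $R$ (which must lie in the $M$-neighbourhood of some $|h_\tau|^{2}$ with $\tau>a_1$) is separated from $\widetilde\eta_k$ by at least $\tfrac{1}{4}\rho^{\alpha_2}$ for $\rho$ large, so $\|(R-\widetilde\eta_k I)^{-1}\|\le 4\rho^{-\alpha_2}$. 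Setting $u=(R-\widetilde\eta_k I)^{-1}P^{\ast}\widetilde\theta_k$ and $\widehat\theta'=(\widetilde\theta_k,-u)$, the block identity
\[
(C(\gamma,\delta)-\widetilde\eta_k I)\widehat\theta'=(-Pu,\,0)
\]
together with $|\widehat\theta'|\ge|\widetilde\theta_k|=1$ (the two blocks are orthogonal) gives
\[
\frac{|(C(\gamma,\delta)-\widetilde\eta_k I)\widehat\theta'|}{|\widehat\theta'|}\le \|P\|^{2}\|(R-\widetilde\eta_k I)^{-1}\|\le 4M^{2}\rho^{-\alpha_2}.
\]

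Finally, since $V(x)$ is real symmetric, the matrix $C(\gamma,\delta)$ is Hermitian, and the distance from $\widetilde\eta_k$ to its spectrum is bounded by $|(C(\gamma,\delta)-\widetilde\eta_k I)\widehat\theta'|/|\widehat\theta'|=O(\rho^{-\alpha_2})$. This produces an eigenvalue $\eta_{s(k)}$ with $\eta_{s(k)}=\widetilde\eta_k+O(\rho^{-\alpha_2})$, which certainly implies the claimed $O(\rho^{-\frac{1}{2}\alpha_2})$ estimate. The principal obstacle is promoting the pairwise separation between the two index groups of diagonal entries $|h_\tau|^{2}$ (given by Lemma \ref{lemma421}) into a uniform lower bound on the gap between $\widetilde\eta_k$ and the full spectrum of $R$; once that gap is in hand, the Schur-style correction $u$ and the block arithmetic are routine.
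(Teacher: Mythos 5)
Your proof is correct, but it follows a genuinely different route from the paper. The paper embeds $D(\gamma,\delta)$ into the block-diagonal matrix $D'(\gamma,\delta)=\mathrm{diag}\bigl(D,\;|h_{a_1+1}|^2I,\ldots,|h_{b_1}|^2I\bigr)$, pairs the approximate eigen-system for the eigenvectors $\theta_s$ of $C(\gamma,\delta)$ obtained in Theorem \ref{theo421} against the zero-padded eigenvector $\Gamma_k$ of $D'$, and then must show that some overlap $|\langle\theta_{s(k)},\Gamma_k\rangle|$ exceeds $c\,\rho^{-\frac{1}{4}\alpha_2}$; this is done by proving $\sum_{s>ma_1}|\langle\Gamma_k,\theta_s\rangle|^2=O(\rho^{-\frac{3}{2}\alpha_2})$ via the binding formula and the separation \eqref{lemma421}, and dividing the residual $O(\sqrt{a_1}\,\rho^{-\alpha_2})=O(\rho^{-\frac{3}{4}\alpha_2})$ by that overlap, which is exactly where the exponent degrades to $\frac{1}{2}\alpha_2$. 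You instead build an explicit approximate eigenvector of $C$ from the eigenvector of $D$ by a Schur-complement (Feshbach-type) correction $u=(R-\widetilde\eta_k I)^{-1}P^{\ast}\widetilde\theta_k$ in the complementary block, bound the resolvent by $4\rho^{-\alpha_2}$ using the separation $\bigl||h_\tau|^2-|h_s|^2\bigr|\geq\frac{1}{3}\rho^{\alpha_2}$ from Lemma \ref{lemma421} together with the $M$-neighbourhood localization of the spectra of $D$ and $R$ (the same Weyl-type fact the lemma's proof uses; strictly speaking part (b) of the lemma concerns eigenvalues of $C$, so you should cite the inequality \eqref{lemma} and the localization argument rather than part (b) itself, and note explicitly that $\widetilde\eta_k$ lies within $M$ of some $|h_s|^2$, $s\leq a_1$), and finish with the standard residual bound for the Hermitian matrix $C$. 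This avoids both $\sqrt{a_1}$ losses and the overlap-counting step, does not need the system from Theorem \ref{theo421} at all, and yields the sharper error $O(\rho^{-\alpha_2})$, which of course implies the stated $O(\rho^{-\frac{1}{2}\alpha_2})$.
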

\begin{proof}  Define the matrix $D'(\gamma,\delta)=(d'_{ij})$ as;
\begin{equation}\label{appmat2}
D'=\left[
  \begin{array}{ccccccccc}
    \mid h_{1}\mid^{2} I & V_{h_{1}-h_{2}} & \cdots & V_{h_{1}-h_{a_{1}}}&0&0&\cdots&0\\
    V_{h_{2}-h_{1}} & \mid h_{2}\mid^{2} I & \cdots& V_{h_{2}-h_{a_{1}}}&0&0&\cdots&0 \\
    \vdots &  &  &  \\
    V_{h_{a_{1}}-h_{1}} & V_{h_{a_{1}}-h_{2}} & \cdots & \mid h_{a_{1}}\mid^{2} I&0&0&\cdots&0\\
    0&0&\cdots&0&\mid h_{a_{1}+1}\mid^{2} I&0&\cdots&0\\
    \vdots&\vdots&\cdots&\vdots&\vdots&\ddots&\cdots&0\\
    0&0&\cdots&0&0&0&\mid h_{b_{1}-1}\mid^{2} I&0\\
    0&0&\cdots&0&0&0&0&\mid h_{b_{1}}\mid^{2} I
  \end{array}
\right]
\end{equation}

Then the spectrum
of the matrix $D'(\gamma,\delta)$ is :
\begin{eqnarray*}
spec(D')&=&spec(D)\bigcup
\{|h_{a_{1}+1}|^{2},|h_{a_{1}+2}|^{2},...,|h_{b_{1}}|^{2}\}
\\
&\equiv&\{\widetilde{\eta}_{1},\widetilde{\eta}_{2},...,\widetilde{\eta}_{ma_{1}},|h_{a_{1}+1}|^{2},|h_{a_{1}+2}|^{2},...,|h_{b_{1}}|^{2}\}.
\end{eqnarray*}
Let us denote by $\Gamma_{k}=(\Gamma_{k}^1,\Gamma_{k}^2,...,\Gamma_{k}^{a_1},0,...,0)_{mb_1\times1}$
the normalized eigenvector corresponding to the $
k$-th eigenvalue
of the matrix $D'$, when $k=1,2,...,m{a_1}$ and by
$\{e_{\tau,i}\}_{i=1,2,\cdots,m}$ the eigenvector corresponding to the
$\tau$-th eigenvalue $|h_{\tau}|^2$ of $D'$, when $\tau=a_{1}+1,a_{1}+2,...,b_{1}$.

Now, using the system from the previous theorem, we have
\begin{eqnarray*}
(D'-\eta_{s}I)[\theta_s^1,\theta_s^2,\ldots,\theta_s^{b_1}]
&=&[(D-\eta_{s}I)[\theta_s^1,\theta_s^2,\ldots,\theta_s^{a_1}],
(|h_{a_{1}+1}|^{2}-\eta_{s})\theta_s^{a_1+1},...,(|h_{b_{1}}|^{2}-\eta_{s})\theta_s^{b_1}]
\\ &=&[O(\rho^{-\alpha_{2}}),...,O(\rho^{-\alpha_{2}}),
(|h_{a_{1}+1}|^{2}-\eta_{s})\theta_s^{a_1+1},...,(|h_{b_{1}}|^{2}-\eta_{s})\theta_s^{b_1}].\hspace{.3in}
\end{eqnarray*}

Taking the inner product of the last system by $\Gamma_k
\,\,,\,\,k=1,2,...,ma_{1}$, using that $D'$ is symmetric and
$D'\Gamma_k=\widetilde{\eta}_{k}\Gamma_k$ we have :
\begin{equation}\label{eigenj}
(\eta_{s(k)}-\widetilde{\eta}_{k})\sum_{j=1}^{a_{1}}\theta_s^j\Gamma_k^j=\sum_{j=1}^{a_{1}}O(\rho^{-\alpha_{2}})\Gamma_k^j,
\end{equation}
where by the Cauchy-Schwarz inequality one has
$$
|\sum_{j=1}^{a_{1}}O(\rho^{-\alpha_{2}})\Gamma_k^j|\leq
\sqrt{\sum_{j=1}^{a_{1}}O(\rho^{-\alpha_{2}})^{2}}\sqrt{\sum_{j=1}^{a_{1}}|\Gamma_k^j|^2}
 \leq
 \sqrt{a_{1}(\rho^{-\alpha_{2}})^{2}}=O(\sqrt{a_{1}}\rho^{-\alpha_{2}}).
$$

Thus, we get
\begin{equation}\label{eigenord}
(\eta_{s(k)}-\widetilde{\eta}_{k})\sum_{j=1}^{a_{1}}\theta_s^j\Gamma_k^j=O(\rho^{-\frac{3}{4}\alpha_{2}}).
\end{equation}

So, we need to show that for any $k=1,2,...,ma_{1}$ there exists
$\,\, \eta_{s(k)},$ such that
\begin{equation}\label{viwi}
|\sum_{j=1}^{a_{1}}{\theta_{s(k)}}^j\Gamma_k^j|=|\langle\theta_{s(k)},\Gamma_k\rangle|>\sqrt{\frac{1-O(\rho^{-\frac{3}{2}\alpha_{2}})}{m\cdot a_{1}}}>c_{23}\rho^{-\frac{1}{4}\alpha_{2}}.
\end{equation}
\newline For this, we consider the decomposition
$\Gamma_k=\sum_{s=1}^{mb_{1}}\langle \Gamma_k,\theta_s\rangle \theta_s$ and the
Parseval's identity
$$1=\sum_{s=1}^{mb_{1}}|\langle \Gamma_k,\theta_s\rangle|^{2}=\sum_{s=1}^{ma_{1}}|\langle \Gamma_k,\theta_s    \rangle|^{2}+\sum_{i=ma_{1}+1}^{mb_{1}}|\langle \Gamma_k,\theta_s\rangle|^{2}$$
First, let us show that
\begin{equation}\label{viwiord}
\sum_{s=ma_{1}+1}^{mb_{1}}|\langle
\Gamma_k,\theta_s\rangle|^{2}=O(\rho^{-\frac{3}{2}\alpha_{2}}).
\end{equation}
Using that, $\Gamma_k=\sum\limits_{\substack{i=1,2,\ldots,m \\ \tau=1,\ldots,a_1} }\langle \Gamma_k, e_{\tau,i}\rangle
e_{\tau,i}$ , the binding formula for $C(\gamma,\delta)$ and
$A(\gamma,\delta)$ ,
the relation \ref{lemma421}, and the Bessel's inequality we obtain the
estimation:
\begin{eqnarray*}
\hspace{1cm}&&\sum_{s=ma_{1}+1}^{mb_{1}}|\langle
\Gamma_k,\theta_s \rangle|^{2}\\&=&\sum_{s=ma_{1}+1}^{mb_{1}}|\langle
\sum\limits_{\substack{i=1,2,\ldots,m \\ \tau=1,\ldots,a_1} } \Gamma_k^{\tau i} e_{\tau,i},\theta_s
\rangle|^{2}\hspace{2.3in}\\&=&\sum_{s=ma_{1}+1}^{mb_{1}}|
\sum\limits_{\substack{i=1,2,\ldots,m \\ \tau=1,\ldots,a_1} }\Gamma_k^{\tau i}\langle e_{\tau,i},\theta_s \rangle|^{2}
=\sum_{s=ma_{1}+1}^{mb_{1}}| \sum\limits_{\substack{i=1,2,\ldots,m \\ \tau=1,\ldots,a_1} }\Gamma_k^{\tau i} \frac{\theta_s \cdot Be_{\tau,i}}{|\eta_{s}-|h_{\tau}|^{2}| }|^{2}\hspace{1.3in}\\
&\leq&16 \sum_{s=ma_{1}+1}^{mb_{1}} \rho^{-2\alpha_{2}}(
\sum\limits_{\substack{i=1,2,\ldots,m \\ \tau=1,\ldots,a_1} }|\Gamma_k^{\tau i} | |\theta_s \cdot Be_{\tau,i}| )^{2}\\&
\leq& \sum_{s=ma_{1}+1}^{mb_{1}}
16\cdot|a_{1}|\cdot m\rho^{-2\alpha_{2}}\left( \sum\limits_{\substack{i=1,2,\ldots,m \\ \tau=1,\ldots,a_1} }   |\Gamma_k^{\tau i}|^{2}    |\theta_s \cdot Be_{\tau,i}|^{2} \right) \\ &\leq&16\rho^{-2\alpha_{2}}|a_{1}|\cdot    m \sum\limits_{\substack{i=1,2,\ldots,m \\ \tau=1,\ldots,a_1} }|\Gamma_k^{\tau i}|^{2}\sum_{s=ma_{1}+1}^{mb_{1}}|\theta_s \cdot Be_{\tau,i}|^{2}\\ &\leq&
16\rho^{-2\alpha_{2}}|a_{1}|\cdot m \sum\limits_{\substack{i=1,2,\ldots,m \\ \tau=1,\ldots,a_1} }|\Gamma_k^{\tau i}|^{2}|Be_{\tau,i}|^{2} \leq
16\rho^{-2\alpha_{2}}|a_{1}|\cdot m \cdot M^2 \sum\limits_{\substack{i=1,2,\ldots,m \\ \tau=1,\ldots,a_1} }|\Gamma_k^{\tau i}|^{2}\hspace{.55in}\\
&\leq &16|a_{1}|m \rho^{-2\alpha_{2}}M^2
 =O(\rho^{-\frac{3}{2}\alpha_{2}}).\hspace{2.5in}
\end{eqnarray*}

Therefore one has
$$\sum_{s=1}^{ma_{1}}|\langle
\Gamma_k, \theta_s\rangle|^{2}=1-O(\rho^{-\frac{3}{2}\alpha_{2}})$$ from
which it follows that there exists $\eta_{s(k)}$ such that
(\ref{viwi}) holds. Dividing both sides of (\ref{eigenord}) by
(\ref{viwi}) we get the result
$$\eta_{s(k)}=\widetilde{\eta_{k}}+O(\rho^{-\frac{1}{2}\alpha_{2}}).$$
\end{proof}
 Now, using the notation
$h_{s}=\gamma-(\frac{s}{2})\delta $ if $s$ is even,
$h_{s}=\gamma+(\frac{s-1}{2})\delta$ if $s$ is odd, for
$s=1,2,...,a_{1}$, (without loss of generality assume that $a_1$
is even) and using the orthogonal decomposition for
 $\gamma\in\frac{\Gamma}{2}$ as $\gamma=\beta+(l+v(\beta))\delta$, where
 $\beta\in H_{\delta}\equiv \{x\in R^{d} : \langle x,\delta
 \rangle=0\},\,\,l\in Z, \,\,v\in[0,1)$ we can write the matrix
 $D(\gamma,\delta)$ as :
 $$D(\gamma,\delta)=|\beta|^{2}I+E(\gamma,\delta),$$
 where $E(\gamma,\delta)$ is: 
 \begin{equation*}\label{appmat2}
 E=\left[
   \begin{array}{ccccccccc}
   \left((l+v)^2 \mid \delta \mid^{2}\right) I & V_{\delta} &  V_{-\delta} &\cdots&V_{\frac{a_1}{2}\delta}\\
     V_{-\delta} & \left((l-1+v)^2 \mid \delta \mid^{2}\right) I &V_{-2\delta} & \cdots&V_{\left(\frac{a_1}{2}-1\right)\delta} \\V_{\delta}&V_{2\delta}&\left((l+1+v)^2 \mid \delta \mid^{2}\right) I&\ldots&V_{\left(\frac{a_1}{2}+1\right)\delta} \\
     \vdots & \vdots  &\vdots&\vdots&\vdots \\
     V_{-\frac{a_1}{2}\delta} &\cdot&\cdot&\ldots& \left((l-\frac{a_1}{2}+v)^2 \mid \delta \mid^{2}\right) I
   \end{array}
 \right]
 \end{equation*}
Denote $n_k=-\frac{k}{2}$ if $k$ is even, $n_k=\frac{k-1}{2}$ if
$k$ is odd. The system
\newline $\{e^{i(n_k+v)s^{'}}: k=1,2,...\}$ is
a basis in $L_2[0,2\pi]$. Let $T(\gamma,\delta)\equiv
T(P(s^{'}),\beta)$ be the operator in $\ell_2$ corresponding to the
Sturm-Liouville operator $T$, generated by
\begin{equation}\label{sturmeqn}
-|\delta|^{2}Y''+P(s^{'})Y=\mu Y
\end{equation}
\begin{eqnarray*}
Y(s^{'}+2\pi)=e^{i2\pi v(\beta)}Y,
\end{eqnarray*}
where $P(s^{'})=(p_{ij}(s^{'}))$ for $p_{ij}(s^{'})=\sum\limits
_{k=1}^{\infty}v_{ijn_k\delta}e^{in_ks^{'}},$ and
$v_{ijn_k\delta}=(v_{ij}(x),\frac{1}{|A_{n_k\delta}|}\sum_{\alpha\in
A_{n_k\delta}}e^{i\langle\alpha,s^{'}\rangle})$, $s=\langle
x,\delta\rangle.$ It means that $T(\gamma,\delta)$ is the infinite
matrix $(Te^{i(l+n_k+v)s^{'}}, e^{i(l+n_m+v)s^{'}})$, $k,m=1,2,\ldots.$
\begin{theorem}\label{lasttheorem}
For every eigenvalue $\eta_{s}$ of the Sturm-Liouville operator
$T(\gamma,\delta),$ there exists an eigenvalue
$\widetilde{\eta_{s}}$ of the matrix $E(\gamma,\delta)$ such that
$$\eta_{s}=\widetilde{\eta_{s}}+O(\rho^{-\frac{3}{4}\alpha_{2}}).$$
\end{theorem}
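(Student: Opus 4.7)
The plan is to view $E(\gamma,\delta)$ as the principal $ma_1\times ma_1$ submatrix of the infinite matrix representing $T(\gamma,\delta)$ in the basis $\{e^{i(l+n_k+v)s'}\}_{k\ge 1}$, and to transfer an eigenfunction of $T$ to the finite block by truncation. This is the mirror image of the passage from $D(\gamma,\delta)$ to $C(\gamma,\delta)$ that was carried out in Theorem~\ref{theo423}, so the argument should be structurally analogous to the one given there.

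Concretely, I would fix an eigenvalue $\eta_s$ of $T(\gamma,\delta)$ with normalized $\ell_2$-eigenvector $\psi_s=(\psi_s^{(k)})_{k\ge 1}$, so that for every $k\ge 1$,
\[
\bigl((l+n_k+v)^2|\delta|^2-\eta_s\bigr)\psi_s^{(k)} + \sum_{m\ne k} V_{(n_m-n_k)\delta}\,\psi_s^{(m)} = 0.
\]
Let $\widetilde\psi_s=(\psi_s^{(1)},\ldots,\psi_s^{(a_1)})^t$ be its truncation. Splitting the inner sum into the inside part ($m\le a_1$) and the tail ($m>a_1$), the $k$-th block of $(E-\eta_s I)\widetilde\psi_s$ is exactly $-\sum_{m>a_1}V_{(n_m-n_k)\delta}\,\psi_s^{(m)}$. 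Since $|n_m\delta|>\tfrac{1}{2}\rho^{\alpha_2/2}$ for $m>a_1$ and since \eqref{coeff} forces the Fourier coefficients $V_{n\delta}$ to be rapidly decreasing in $n$, I would estimate these tail sums via the Cauchy--Schwarz inequality and $\|\psi_s\|_{\ell_2}=1$; summing the square norms over $k\le a_1$ then gives $\|(E-\eta_s I)\widetilde\psi_s\|$ of order $\rho^{-p\alpha_2/2}$, which is comfortably below $\rho^{-\frac{3}{4}\alpha_2}$.

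Next I would establish a lower bound $\|\widetilde\psi_s\|\ge c\,\rho^{-\frac{1}{4}\alpha_2}$ by a Parseval-style calculation analogous to \eqref{viwi}--\eqref{viwiord}: the binding relation for $T$ expresses each tail coordinate $\psi_s^{(m)}$ ($m>a_1$) as a coupling to the inside coordinates divided by $((l+n_m+v)^2|\delta|^2-\eta_s)$, whose absolute value is at least of order $\rho^{\alpha_2}$ by the separation principle underlying \eqref{lemma421}; summing the resulting bound $\|B\|^2$-style and subtracting from $\|\psi_s\|^2=1$ shows that the inside mass $\|\widetilde\psi_s\|^2$ is $1-O(\rho^{-\alpha_2})$ on the relevant range of $\eta_s$. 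Combining the two estimates through the standard inequality $\operatorname{dist}(\eta_s,\mathrm{spec}(E))\le \|(E-\eta_s I)v\|/\|v\|$ applied to $v=\widetilde\psi_s$ yields an eigenvalue $\widetilde\eta_s$ of $E$ with $\eta_s=\widetilde\eta_s+O(\rho^{-\frac{3}{4}\alpha_2})$.

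The step I expect to be most delicate is the quantitative lower bound on $\|\widetilde\psi_s\|$: if $\eta_s$ were far from every diagonal entry $(l+n_k+v)^2|\delta|^2$ with $k\le a_1$, the mass of $\psi_s$ could in principle concentrate in the tail and the theorem would be vacuous. The argument therefore must be read, as in Theorems~\ref{theo421} and~\ref{theo423}, with $\eta_s$ implicitly restricted to an $M$-neighborhood of one of the diagonal entries of $E$; it is at this point that the separation inequality~\eqref{lemma421} in its Sturm--Liouville analogue becomes essential, and ensuring that the constant in the lower bound tracks $\rho^{\alpha_2}$ uniformly (rather than something weaker) is the main technical hurdle.
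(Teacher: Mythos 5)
Your skeleton is the same as the paper's: restrict $\eta_{s}$ to an $M$-neighbourhood of the diagonal entries of $E(\gamma,\delta)$ (the paper indeed imposes $|\eta_{s}-|(i'+v)\delta|^{2}|<\sup|P(s')|$, $s\le a_{1}/2$, exactly the implicit restriction you anticipated), truncate the normalized eigenvector of $T(\gamma,\delta)$ to the first $a_{1}$ blocks, bound the residual $(E-\eta_{s}I)$ applied to the truncation, bound the retained mass from below via the separation $|\eta_{s}-|(\tau'+v)\delta|^{2}|>c\rho^{\alpha_{2}}$ for $\tau>a_{1}$ (the Sturm--Liouville analogue of \eqref{lemma421}, proved in the paper from $\gamma\in V_{\delta}(\rho^{\alpha_{1}})$, $|l|<c\rho^{\alpha_{1}}$, $a_{1}\sim\rho^{\alpha_{2}/2}$ and $\alpha_{2}>2\alpha_{1}$), and finish with the distance-to-spectrum inequality.

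The genuine flaw is your residual estimate. The $k$-th block of $(E-\eta_{s}I)\widetilde\psi_{s}$ is $-\sum_{m>a_{1}}V_{(n_{m}-n_{k})\delta}\,\psi_{s}^{(m)}$, and the relevant Fourier index is $n_{m}-n_{k}$, not $n_{m}$: for $k$ close to $a_{1}$ and $m$ slightly above $a_{1}$ one has $|n_{m}-n_{k}|=O(1)$, so these coupling coefficients are $O(1)$ and condition \eqref{coeff} provides no decay there; Cauchy--Schwarz against $\|\psi_{s}\|_{\ell_{2}}=1$ then gives only $O(1)$ for those rows, not $\rho^{-p\alpha_{2}/2}$ (the fact that your route would yield an error far better than $\rho^{-\frac{3}{4}\alpha_{2}}$ is itself a warning sign). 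The smallness must come from the tail coordinates, not the coefficients: by the binding formula and the separation estimate, $|\psi_{s}^{(m)}|\le\|\widetilde B\|/|\eta_{s}-|(m'+v)\delta|^{2}|=O(\rho^{-\alpha_{2}})$ for $m>a_{1}$, and combined with $\sum_{n}|v_{ij\,n\delta}|\le M$ this gives each row of the residual as $O(\rho^{-\alpha_{2}})$, hence $\|(E-\eta_{s}I)\widetilde\psi_{s}\|=O(\sqrt{a_{1}}\,\rho^{-\alpha_{2}})=O(\rho^{-\frac{3}{4}\alpha_{2}})$; this, with retained mass $1-O(\rho^{-2\alpha_{2}})$, is precisely where the exponent $\frac{3}{4}\alpha_{2}$ in the statement comes from. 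Since you already invoke the tail-coordinate bound when estimating the retained mass, the repair uses only material you have, and with it your argument coincides with the paper's proof.
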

\begin{proof}
Decompose the infinite matrix $T(\gamma,\delta)$ as
$T(\gamma,\delta)=\widetilde{A}+\widetilde{B}$, where the matrix
$\widetilde{A}$ is defined by
\begin{equation}\label{theorem_res2eq7}
   \widetilde{A}= \left[
      \begin{array}{ccc}
        \left((l+v)^2 \mid \delta \mid^{2}\right) I  &  & 0 \\& \left((l-1+v)^2 \mid \delta \mid^{2}\right) I  \\
         & \ddots &  \\
        0 &  &  \left((l-\frac{a_1}{2}+v)^2 \mid \delta \mid^{2}\right) I\\
      \end{array}
    \right]
\end{equation}
and $ \widetilde{B}=T(\gamma,\delta)- \widetilde{A}.$
Let ${\eta_{s}}$ be an eigenvalue of $T(\gamma,\delta)$, and
$\Theta_{s}=(\Theta_{s}^1,\Theta_{s}^2,\Theta_{s}^3,...) $ be the corresponding
normalized eigenvector, that is $T\Theta_{s}=\eta_{s}\Theta_{s}.$ Denote by
$\{e_{\tau i}\}_{i=1}^m=(0,0,...,1,0,...)$ the $\tau$-th eigenvector of the matrix
$\widetilde{A}$. Then, the corresponding $\tau$-th eigenvalue of
$\widetilde{A}$ is $|(\tau'+v)\delta|^{2}$, that is $
\widetilde{A}e_{\tau i}=|(\tau'+v)\delta|^{2}e_{\tau i},$ where
$\tau'=l-\frac{\tau}{2}$ if $\tau$ is even, $\tau'=l+\frac{\tau-1}{2}$ if $\tau$ is
odd, for $\tau=1,2,...$.\newline One can easily verify that
\begin{equation}\label{star}
(\eta_{s}-|(\tau'+v)\delta|^{2})\theta_{s}^{\tau i}=\langle
\Theta_{s},\widetilde{B}e_{\tau i}\rangle
\end{equation}
 and using the orthogonal decomposition
$\widetilde{B}e_{\tau i}=\sum\limits
_{j=1}^{m}\sum\limits
_{k=1}^{\infty}\langle\widetilde{B}e_{\tau i},e_{k,j}\rangle
e_{k,j}$, we get
$$(\eta_{s}-|(\tau'+v)\delta|^{2})\Theta_s^{\tau i}=\sum\limits
_{j=1}^{m}\sum\limits
_{k=1}^{\infty}\langle\widetilde{B}e_{\tau i},e_{k,j}\rangle\Theta_s^{k j}
$$
and since
$\langle\widetilde{B}e_{\tau,i},e_{k,j}\rangle=v_{ji(n_k-n_{\tau})\delta}$ for  $k\neq \tau$,
\begin{equation}\label{lastest}
(\eta_{s}-|(\tau'+v)\delta|^{2})\Theta_{s}^{\tau i}-\sum_{j=1}^{m}\sum_{k=1}^{a_{1}}v_{ji(n_k-n_{\tau})\delta}\Theta_s^{k j}
=\sum_{j=1}^{m}\sum_{k=a_{1}+1}^{\infty}v_{ji(n_k-n_{\tau})\delta}\Theta_s^{k j}.
\end{equation}

Now take any eigenvalue $\eta_{s}$ of $T(\gamma,\delta)$,
satisfying $|\eta_{s}-|(i'+v)\delta|^{2}|< sup|P(s^{'})|$ for
$s=1,2,...,\frac{a_1}{2}$, where $i'=l-\frac{s}{2}$ if $s$ is
even, $i'=l+\frac{s-1}{2}$ if $s$ is odd. The relations $\gamma\in
V_{\delta}(\rho^{\alpha_1})$ $(\delta\neq e_i)$ and $\gamma=\beta+(l+v)\delta$,
$\langle \beta, \delta \rangle=0$ imply
$$
|2\langle \gamma, \delta \rangle+|\delta|^2|=
|(l+v)|\delta|^2+|\delta|^2|<\rho^{\alpha_1},\hspace{.2in}
|l|<c_{12}\rho^{\alpha_1}.
$$
Therefore using the definition of $i'$ and $\tau'$, we have
$$
|(i'+v)\delta|<\frac{|a_1\delta|}{4}+c_{23}\rho^{\alpha_1},
$$
for $s=1,2,...\frac{a_1}{2}$ and
$$
|(\tau'+v)\delta|>\frac{|a_1\delta|}{2}-c_{24}\rho^{\alpha_1},
$$
for $\tau>a_1.$ Since $|a_1|>c_{25}\rho^{\frac{\alpha_2}{2}}$ and
$\alpha_2>2\alpha_1,$ we have
\begin{eqnarray}
||(i'+v)\delta|^{2}-|(\tau
'+v)\delta|^{2}|>c_{26}\rho^{\alpha_{2}},
\end{eqnarray}
for $s\leq \frac{a_1}{2}$, $\tau>a_1$, which implies
\begin{eqnarray}\label{mest}
|\eta_{s}-|(\tau'+v)\delta|^{2}|\nonumber=
||\eta_{s}-|(i'+v)\delta|^{2}|-||(\tau'+v)\delta|^{2}|-|(i'+v)\delta|^{2}||
>c_{17}\rho^{\alpha_{2}},
\end{eqnarray}
for $s=1,2,...\frac{a_1}{2}\,\,,\,\,\tau>a_1$. \newline Thus,
\begin{eqnarray}\label{46}
|\sum_{j=1}^{m}\sum_{k=a_{1}+1}^{\infty}v_{ij(n_k-n_{\tau})\delta}\Theta_s^{kj}|=\sum_{j=1}^{m}\sum_{k=a_{1}+1}^{\infty}
|v_{ij(n_k-n_{\tau})\delta}| \left|\frac{\langle \Theta
_{s},\widetilde{B}
e_{k,j}\rangle}{\eta_{s}-|(k^{'}+v)\delta|^{2}}\right| \nonumber \\
\leq \sum_{j=1}^{m}\sum_{k=a_{1}+1}^{\infty}|v_{ij(n_k-n_{\tau})\delta}| \frac{\|
\Theta_{s}\|\|\widetilde{B}\|
\|e_{k,j}\|}{|\eta_{s}-|(k^{'}+v)\delta|^{2}|}\leq M\rho^{-\alpha_{2}}
\sum_{j=1}^{m}\sum_{k=a_{1}+1}^{\infty}|v_{ij(n_k-n_{\tau})\delta}|\\
\leq c_{18}\rho^{-\alpha_{2}},\nonumber
\end{eqnarray}
since $\|\widetilde{B}\|\leq M $. Indeed, $\widetilde{B}$
corresponds to the operator $P:Y\rightarrow P(s^{'})Y $ in
$L_{2}[0,2\pi]$, which has norm $sup|P(s^{'})|\leq M.$
 Therefore writing the equation (\ref{lastest}) for all $ \tau=1,2,...,a_{
 1}$, and using
(\ref{46}) we get the following system
\begin{eqnarray}\label{47}
(E(\gamma,\delta)-\eta_{s}I)[\Theta_{s}^1,\Theta_{s}^2,...,\Theta_{s}^{a_{1}}]
=[O(\rho^{-\alpha_{2}}),O(\rho^{-\alpha_{2}}),...,O(\rho^{-\alpha_{2}})].
\end{eqnarray}

Using that, $\Theta_{s}=\sum\limits
_{i=1}^{\infty}\Theta_{s}^{\tau}e_{\tau,i} $, the formula
(\ref{star}) and (\ref{mest}), we have
$$\sum_{j=a_{1}+1}^{\infty}|\Theta_{s}^{\tau i}|^{2}=\sum_{j=a_{1}+1}^{\infty}
|\frac{\langle \Theta_{s},\widetilde{B}
e_{\tau,i}\rangle}{\eta_{s}-|(\tau^{'}+v)\delta|^{2}}|^{2}=O(\rho^{-2\alpha_{2}})$$
and thus
,$$\sum_{i=1}^{a_{1}}|\Theta_{s}^{\tau i}|^{2}=1-O(\rho^{-2\alpha_{2}}).$$
Taking the norm of the vector (see (\ref{47}))
\begin{eqnarray*}
[\Theta_{s}^1,\Theta_{s}^2,...,\Theta_{s}^{a_{1}}] =(E(\gamma,\delta)-\eta_{s}I)^{-1}
[O(\rho^{-\alpha_{2}}),...,O(\rho^{-\alpha_{2}})],
\end{eqnarray*}
we get
 $$
 \sqrt{\frac{1-O(\rho^{-2\alpha_{2}})}{m}}=
\|(E(\gamma,\delta)-\eta_{s}I)^{-1}\|O(\sqrt{a_{1}}\rho^{-\alpha_{2}})
$$
or
$$
\min_j|\eta_{s}-\widetilde{\eta}_{\tau}|=\frac{O(\sqrt{a_{1}}\rho^{-\alpha_{2}})\cdot
\sqrt{m}}
{\sqrt{1-O(\rho^{-2\alpha_{2}})}}=O(\rho^{-\frac{3}{4}\alpha_{2}}),
$$
where the minimum is taken over all eigenvalues $\,\,
\widetilde{\eta}_{\tau}$ of the matrix $E(\gamma,\delta)$. Thus, the
result follows. 
\end{proof}

\begin{theorem}(\textbf{Main result})\label{maintheorem}
For every $\beta\in H_{\delta}$ ,$\,|\beta|\sim \rho$ and for
every eigenvalue $\eta_{s}(v(\beta))$ of the Sturm-Liouville
operator $T(\gamma,\delta),$ there is an eigenvalue $\Upsilon_N$
of the operator $L(V)$  satisfying
$$
\Upsilon_N=|\beta|^{2}+\eta_s+\lambda_i+O(\rho^{-\frac{1}{2}\alpha_{2}}),
$$
where $\lambda_i$ is a given eigenvalue of the matrix $V_0$.

\end{theorem}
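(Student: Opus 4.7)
The plan is to chain together Theorems \ref{lasttheorem}, \ref{theo423}, and \ref{theorem_res2} to transfer a Sturm--Liouville eigenvalue successively through $E(\gamma,\delta)$, $D(\gamma,\delta)$, $C(\gamma,\delta)$, and finally the operator $L(V)$. First, given an eigenvalue $\eta_s$ of $T(\gamma,\delta)$, Theorem \ref{lasttheorem} produces an eigenvalue $\widetilde{\eta}_s$ of $E(\gamma,\delta)$ with $\eta_s=\widetilde{\eta}_s+O(\rho^{-\frac{3}{4}\alpha_2})$. Since the decomposition $D(\gamma,\delta)=|\beta|^2 I+E(\gamma,\delta)$ is built into the construction preceding Theorem \ref{lasttheorem}, the number $|\beta|^2+\widetilde{\eta}_s$ is automatically an eigenvalue of $D(\gamma,\delta)$.

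Next, I would apply Theorem \ref{theo423} to this eigenvalue of $D(\gamma,\delta)$ to obtain an eigenvalue $\eta_{s(k)}(\gamma)$ of the larger matrix $C(\gamma,\delta)$ satisfying $\eta_{s(k)}=|\beta|^2+\widetilde{\eta}_s+O(\rho^{-\frac{1}{2}\alpha_2})$. Combining with the first step yields $\eta_{s(k)}=|\beta|^2+\eta_s+O(\rho^{-\frac{1}{2}\alpha_2})$, since the contribution $\rho^{-3\alpha_2/4}$ is absorbed by the larger error $\rho^{-\alpha_2/2}$.

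The final step is to invoke Theorem \ref{theorem_res2} on this eigenvalue $\eta_{s(k)}$ of $C(\gamma,\delta)$, together with the prescribed eigenvalue $\lambda_i$ of $V_0$: this produces an eigenvalue $\Upsilon_N$ of $L(V)$ with $\Upsilon_N=\lambda_i+\eta_{s(k)}+O(\rho^{-p\alpha+\frac{d}{4}3^{d}\alpha+\frac{d-1}{2}})$. Substituting gives the desired formula $\Upsilon_N=|\beta|^2+\eta_s+\lambda_i+O(\rho^{-\frac{1}{2}\alpha_2})$, provided the explicit parameter choice $\alpha=\frac{1}{d+p}$, $\alpha_2=\frac{2p_2+1}{d+p}$ with $p=l-d$ is large enough that the remainder in Theorem \ref{theorem_res2} is dominated by $\rho^{-\alpha_2/2}$; this is essentially the reason the integer $p_2=[\frac{p-5}{3}]-1$ is chosen the way it is at the start of Section 2.

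The main obstacle is verifying the hypothesis $|\eta_{s(k)}-|\gamma|^2|<\frac{3}{8}\rho^{\alpha_1}$ required to apply Theorem \ref{theorem_res2}. Using the orthogonal decomposition $\gamma=\beta+(l+v(\beta))\delta$ with $\beta\perp\delta$, we have $|\gamma|^2=|\beta|^2+(l+v)^2|\delta|^2$, so the condition reduces to controlling $|\widetilde{\eta}_s-(l+v)^2|\delta|^2|$. Since by the proof of Lemma \ref{lemma421} the eigenvalues of $E(\gamma,\delta)$ lie in $M$-neighborhoods of the diagonal entries $|(l+n_k+v)\delta|^2$, this forces one to restrict attention to the appropriate branch of the Sturm--Liouville spectrum where $n_k$ is small, and to track the index correspondence $s\mapsto\widetilde{\eta}_s\mapsto\eta_{s(k)}$ carefully through the three chained lemmas; once this bookkeeping is done, the rest of the argument is a mechanical composition of the error estimates.
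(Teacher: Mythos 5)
Your proposal follows essentially the same route as the paper: chain Theorem \ref{lasttheorem} with the decomposition $D(\gamma,\delta)=|\beta|^{2}I+E(\gamma,\delta)$ to get an eigenvalue of $D$, pass to an eigenvalue of $C(\gamma,\delta)$ via Theorem \ref{theo423}, and conclude with Theorem \ref{theorem_res2}, absorbing the smaller errors into $O(\rho^{-\frac{1}{2}\alpha_{2}})$. In fact you are somewhat more careful than the paper, which does not explicitly verify the hypothesis $|\eta_{s(k)}-|\gamma|^{2}|<\frac{3}{8}\rho^{\alpha_{1}}$ of Theorem \ref{theorem_res2} nor the dominance of its remainder by $\rho^{-\frac{1}{2}\alpha_{2}}$; the points you flag are exactly the bookkeeping the paper leaves implicit.
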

\begin{proof}
From Theorem \ref{lasttheorem} and the definition of
$E(\gamma,\delta),$  there exists an eigenvalue
$\widetilde{\eta}_{l(s)}$ of the matrix $D(\gamma,\delta),$
where $\gamma$ has a decomposition
$\gamma=\beta+(l+v(\beta))\delta,$  satisfying
$\widetilde{\eta}_{l(s)}=|\beta|^{2}+\eta_{s}+O(\rho^{-\frac{3}{4}\alpha_{2}}).$
Therefore, the result follows from Theorem \ref{theo423} and
Theorem \ref{theorem_res2}. 
\end{proof}


\begin{thebibliography}{00}

\bibitem{Atilgan} At{\i}lgan, \c{S}., Karak{\i}l{\i}\c{c}, S., \& Veliev.
O. A. (2002). Asymptotic Formulas for the Eigenvalues of the
Schr\"{o}dinger Operator. \textit{Turk J Math, 26}, 215-227.

\bibitem{Berezin} Berezin, F. A., Shubin, M. A. \textit{The Schr\"{o}dinger Equation},
Kluwer Academic Publishers,  Dordrecht  (1991).

\bibitem{coskan1}Co\d{s}kan, D., Karak{\i}l{\i}\c{c}, S. (2011). High, energy asymptotics for eigenvalues of the Schr{\"o}dinger operator with a matrix potential, Mathematical Communications, 16(2).

\bibitem{Feldman} Feldman, J., Knoerrer, H., \& Trubowitz, E. (1990). The Perturbatively
Stable Spectrum of the Periodic Schr\"{o}dinger Operator.
\textit{Invent. Math., 100}, 259-300.

\bibitem{Feldman1} Feldman, J., Knoerrer, H., \& Trubowitz, E. (1991). The Perturbatively
Unstable Spectrum of the Periodic Schr\"{o}dinger Operator.
\textit{Comment. Math. Helvetica, 66}, 557-579.

\bibitem{Friedlanger} Friedlanger, L. (1990). On the Spectrum for the Periodic Problem for
the Schr\"{o}dinger Operator. \textit{Communications in Partial
Differential Equations, 15}, 1631-1647.

\bibitem{Hald} Hald, O. H., \& McLaughlin, J.R. (1996). Inverse Nodal Problems: Finding the
Potential from Nodal Lines. \textit{Memoirs of AMS, 572}, 119,
0075-9266.

\bibitem{Karalilic1} Karak{\i}l{\i}\c{c}, S., At{\i}lgan, \c{S}., \&
Veliev. O. A. (2005). Asymptotic Formulas for the Eigenvalues of the
Schr\"{o}dinger Operator with Dirichlet and Neumann Boundary
Conditions. \textit{Reports on Mathematical Physics (ROMP), 55}(2),
221-239.

\bibitem{Karakilic2} Karak{\i}l{\i}\c{c}, S., Veliev. O. A., \& At{\i}lgan, \c{S}. (2005).
Asymptotic Formulas for the Resonance Eigenvalues of the
Schr\"{o}dinger Operator. \textit{Turkish Journal of Mathematics,
29}(4), 323-347.

\bibitem{Karpeshina} Karpeshina, Y. (1992). Perturbation Theory for the Schr\"{o}dinger
Operator with a non-smooth Periodic Potential. \textit{Math.
USSR-Sb, 71}, 701-123.

\bibitem{Karpeshina1} Karpeshina, Y. (1996). Perturbation series for the
Schr\"{o}dinger Operator with a Periodic Potential near Planes of
Diffraction. \textit{Communication in Analysis and Geometry, 4}, 3,
339-413.

\bibitem{Karpeshina2} Karpeshina, Y. (2002). On the Spectral
Properties of Periodic Polyharmonic Matrix \\Operators. \textit{Indian
Acad. Sci. (Math. Sci.), 112}(1), 117-130.

\bibitem{Kato} Kato, T. \textit{Perturbation Theory for Linear
Operators}, Springer, Berlin (1980)

\bibitem{Reed} Reed, M., Simon, B. \textit{Methods of Modern Mathematical
Physics}, 3rd ed., New York, San Francisco, London: Academic Press
(1987)vol. IV

\bibitem{Veliev1} Veliev, O. A. (1987). Asymptotic Formulas for the Eigenvalues
of the Periodic Schr\"{o}dinger Operator and the Bethe-Sommerfeld
Conjecture. \textit{Functsional Anal. i Prilozhen, 21}, 2, 1-15.


\bibitem{Veliev4} Veliev, O. A. (2006). Asymptotic Formulae for the Bloch
Eigenvalues Near Planes of \\Diffraction. \textit{Reports on Mathematical Physics(ROMP), 58(3)}, 445-464.

\bibitem{Veliev5} Veliev, O. A. (2007). Perturbation Theory for the Periodic Multidimensional Schr\"{o}dinger Operator and
the Bethe-Sommerfeld Conjecture. \textit{International Journal of
Contemporary Mathematical Sciences, 2 (2)}, 19-87.
\bibitem{Birkhoff} Birkhoff, G. D. (1908).
 On the asymptotic character of the solutions of certain linear differential equations containing a parameter.
 \underline{Trans. Amer. Math. Soc.}, 9, 219-23.
\bibitem{Eastham}Eastham, M. S. P. (1971). The Schr\"{o}dinger
equation with a periodic potential.\underline{Proc. Roy. Soc.}
Edinburgh, 69, 125-31.
\bibitem{Eastham}Eastham, M. S. P. (1973). \underline{The
Spectral Theory of Periodic Differential}\\
\underline{Equations}, Edinburg: Scottish Academic Press.
\bibitem{Feldman2}Feldman, J. \& Knoerrer, H. \& Trubowitz, E. (1991)The
Perturbatively unstable Spectrum of the Periodic Schrodinger
Operator. \underline{Comment. Math. Helvetica}, 66, 557-579.
\bibitem{Friedlanger}Friedlanger, L. (1990). On the Spectrum for the Periodic Problem for the Schrodinger
Operator.\underline{Communications in Partial Differential
Equations}, 15, 1631-1647.
\bibitem{Kato}Kato, T. (1966). \underline{Perturbation Theory for Linear
Operators}, Springer-Verlag.
\bibitem{Marchenko} Marchenko,
V. A., (1986). \underline{Sturm-Liouville Operators and their Applications,}\\
 Basel: Birkhauser.
\bibitem{Naimark}Naimark, M. A. (1968). \underline{Linear Differential Operators}.
George G. Harrap \& \\
CO., LTD.
\bibitem{Novikov}Novikov, S. P.(1983). Two-Dimensional
Schr\"{o}dinger Operators in Periodic Fields. \underline{In Itogi
Nauki i Techniki}, Contemporary Problems of Math.,VINITI,Moscow,
23, pp.3-23.
\bibitem{Shubin}Shubin, M. A. (1994) \underline{Partial Differential Equations VII,}
Spectral Theory of Differential Operators, Springer-Verlag. Berlin
Heidelberg.
\bibitem{Tamarkin}Tamarkin, Ya. D. (1917).\underline{On certain general problems of the theory of}\\
\underline{ ordinary linear differential equations.} Petrograd.
Russia.
\bibitem{Veliev1.1} Veliev, O. A. (1983). On the Spectrum of the
Schr\"{o}dinger Operator with Periodic Potential.\underline{Dokl.
Akad. Nauk SSSR} 268, no.6, 1289-1292.
\bibitem{Veliev3}Veliev, O. A.(1988). The Spectrum of Multidimensional Periodic Operators.
\underline{Teor. Funktsional Anal. i Prilozhen}, 49, 17-34.
\bibitem{Veliev4.1}Veliev, O. A.(2001). The Periodic
Multidimensional Scr\"{o}dinger Operator, Part 1,\underline{
Asymptotic Formulas for Eigenvalues}. University of Texas,
Mathematics Department, Mathematical Physics Preprint Archive,
01-446, Dec.4, 2001.
\bibitem{Karakilic3} Karak{\i}l{\i}\c{c}, S. (2004). On the Perturbation Theory for the  Multidimensional  Schr\"{o}dinger Operator. \.{I}zmir.
\bibitem{Karakilicsetenay} Karak{\i}l{\i}\c{c}, S. 
\& Akduman S. (2014). arXiv1409.4718K, Eigenvalue Asymptotics for the Schr\"odinger Operator with a Matrix Potential in a Single Resonance Domain,  arXiv:1409.4718 
\end{thebibliography}
\end{document}